\newif\ifthesis
\newcommand{\CC}{\mathbb{C}}
\newcommand{\ZZ}{\mathbb{Z}}
\newcommand{\QQ}{\mathbb{Q}}
\newcommand{\NN}{\mathbb{N}}
\newcommand{\lam}{\lambda}
\newcommand{\FF}{\mathbb{F}}
\newcommand{\then}{\Rightarrow}
\newcommand{\calD}{\mathcal{D}}
\newcommand{\alp}{\alpha}
\theoremstyle{definition}
\newtheorem{theorem}{Theorem}[section]
\newaliascnt{lemct}{theorem}
\newtheorem{lem}[lemct]{Lemma}
\newaliascnt{propct}{theorem}
\newtheorem{prop}[propct]{Proposition}
\newaliascnt{corct}{theorem}
\newtheorem{cor}[corct]{Corollary}
\newaliascnt{exmct}{theorem}
\newaliascnt{exerct}{theorem}
\newaliascnt{discct}{theorem}
\newtheorem{disc}[discct]{Discussion}
\newaliascnt{rmkct}{theorem}
\newtheorem{rmk}[rmkct]{Remark}
\newaliascnt{clmct}{theorem}
\newaliascnt{qstct}{theorem}
\newaliascnt{deffct}{theorem}
\newtheorem{deff}[deffct]{Definition}
\newcommand{\fg}{finitely generated }
\newcommand{\st}{such that }
\newcommand{\twlogd}[1]{without loss of generality{#1}}
\newcommand{\twlog}{\twlogd{ }}
\newcommand{\tiff}{if and only if }
\newcommand{\wrt}{with respect to }
\newcommand{\mth}{^{\text{th}}}
\newcommand{\bref}[1]{\textbf{\autoref{#1}}}
\newcommand{\pref}[1]{(\ref{#1})}
\def\O{{\mathcal O}}
\newcommand{\mchar}{\mathrm{char}}
\newcommand{\sbs}{\subset}
\def\P{{\mathbb P}}
\newcommand{\PP}{\mathbb P}
\newif\ifdebug
\newcommand{\labelt}[1]{\label{#1}\ifdebug \texttt{\smaller{#1     }} \fi}
\numberwithin{equation}{theorem}
\title{ An Elementary computation of the $F$-Pure Threshold of an Elliptic Curve}  
\author{Gilad Pagi}
\thanks{The author  acknowledges the financial support of NSF grant DMS-0943832.}
\begin{document}

\maketitle
\begin{abstract}
We compute the $F$-pure threshold of a degree three homogeneous polynomial in three variables with an isolated singularity. The computation uses elementary methods to prove a known result of Bhatt and Singh (from \cite{BBelliptic}).
\end{abstract}
\section{Introduction}

In this note, we provide an alternative and elementary proof for a known result about the $F$-pure threshold of a  homogeneous polynomial of degree three in three variables with an isolated singularity. Such a polynomial defines an elliptic curve in $\PP^2$. Let $K$ denote a field of prime characteristic $p$ and let $R=K[x_1,...,x_t]$. Fix any polynomial $f\in R$. By $F$-pure threshold we mean:
 \begin{equation}\labelt{FTdef}
 FT(f):=\sup\left\{ \frac{N}{p^e}\mid  N,e\in  \ZZ_{>0},  f^N\not\in (x_1^{p^e},...,x_t^{p^e})R   \right\},
 \end{equation} a definition  that first appeared in \cite{BMS09}, although the first formulation using tight closure theory is stated in \cite{HW02}. 
 
 The $F$-pure threshold is a numerical measurement of the singularity of $f$ at the origin. If $f$ is smooth there, $FT(f)=1$. Smaller values of $FT(f)$ mean ``worse singularities" of $f$ at the origin. The $F$-pure threshold is a characteristic $p$ analog of the log canonical threshold of a complex singularity (see \cite{KOL97}). When $f$ is defined over $\CC$, one can reduce to the characteristic $p$ case, compute $FT(f)$ and compare the values in different primes $p$ to the log canonical threshold. The limit of $FT(f)$ when $p\to \infty$ approaches the log canonical threshold of $f$ \cite[Theorems 3.3,3.4]{MTW05}. This fact is the culmination of a series of papers, going back to \cite{HH90}, \cite{Smi00}, \cite{H01}, \cite{HW02}, \cite{HY03}, \cite{T04}, \cite{HT04}, \cite{TW04}. See the survey \cite{KSbasic} for a gentle introduction. 
 
The $F$-pure threshold of the defining equation of an elliptic curve in $\P^2$ is closely related to supersigularity. Recall the definition of supersingularity of an elliptic curve $E$ in characteristic $p>2$. The Frobenius morphism $E \overset{F} \longrightarrow E$ induces a map $H^1(E,\O_E) \overset{F^*} \longrightarrow H^1(E,\O_E)$. Then $E$ is defined to be supersingular if $F^*$ is the zero map. Otherwise, $E$ is ordinary. 

For our purpose, we adopt a more concrete characterization of supersingularity, in terms of the Hasse invariant of the defining polynomial $f$ of $E$ in $\PP^2$. We review and develop this point of view in \bref{ssDef}. See also \cite[IV.4]{HARTS} and \cite[V.3,V.4]{SILV}.

In the upcoming sections we present an elementary proof of the following result of Bhatt and Singh:
\begin{theorem}[\textbf{Main Theorem}]\labelt{mainThm}
 Let $K$ denote a field of prime characteristic $p>2$. Let $f\in K[x,y,z]$ be a homogeneous polynomial of degree three defining an elliptic curve $E$ in $ \P_K^2$.
 Then:
 \[ FT(f)=\left\{
\begin{array}{ll}
1& \text{ if } E \text{ is ordinary}\\
1-\frac{1}{p}& \text{ if } E \text{ is supersingular}
\end{array}\right.
\]
\end{theorem}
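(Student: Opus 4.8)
The key object is the “anti-diagonal” coefficient governing membership in Frobenius powers: for each $e$, I want to understand for which $N$ we have $f^N \notin (x^{p^e}, y^{p^e}, z^{p^e})$. Since $f$ is homogeneous of degree $3$, $f^N$ is homogeneous of degree $3N$, and the monomial $x^{p^e-1}y^{p^e-1}z^{p^e-1}$ — the unique monomial of degree $3(p^e-1)$ not in the ideal $(x^{p^e},y^{p^e},z^{p^e})$ — is the only obstruction that matters when $3N = 3(p^e-1)$, i.e. $N = p^e - 1$. More generally, $f^N \notin (x^{p^e},y^{p^e},z^{p^e})$ iff $f^N$ has a nonzero coefficient on some monomial $x^ay^bz^c$ with $a,b,c \le p^e-1$; by degree count $a+b+c = 3N$, so this forces $N \le p^e-1$, and the extreme case $N = p^e-1$ is exactly the coefficient of $x^{p^e-1}y^{p^e-1}z^{p^e-1}$ in $f^{p^e-1}$. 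So the first step is to record $FT(f) \le 1$ always, and to reduce the computation to tracking this single coefficient $c_e := [x^{p^e-1}y^{p^e-1}z^{p^e-1}]\, f^{p^e-1}$, together with analogous coefficients for $N$ slightly below $p^e-1$.

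**Connecting to the Hasse invariant.** The quantity $c_1 = [x^{p-1}y^{p-1}z^{p-1}]\,f^{p-1}$ is, essentially by definition, the Hasse invariant of $E$, and $E$ is supersingular precisely when $c_1 = 0$ (this is the concrete characterization promised in the reference to \bref{ssDef}). In the ordinary case $c_1 \ne 0$, so $f^{p-1} \notin (x^p,y^p,z^p)$, giving $FT(f) \ge (p-1)/p$ immediately from the definition with $e = 1$; then I would push $e$ to infinity by a multiplicativity/iteration argument. The cleanest route: show that if $f^{N} \notin (x^{p^e},\dots)$ then $f^{Np+(p-1)} \notin (x^{p^{e+1}},\dots)$, using that $c_1 \ne 0$ lets one “lift” a surviving monomial from level $e$ to level $e+1$ by multiplying by $f^{p-1}$ and extracting the top monomial — a Frobenius-linearity bookkeeping. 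Iterating from $N = p-1$, $e=1$ gives survival at $N = p^e - 1$ for all $e$, whence $FT(f) \ge \sup_e (p^e-1)/p^e = 1$, and combined with the upper bound $FT(f) = 1$.

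**The supersingular case.** Here $c_1 = 0$, and I expect $FT(f) = 1 - 1/p$, i.e. $\sup\{N/p^e : f^N \notin \mathfrak m^{[p^e]}\} = 1 - 1/p$. The upper bound is the crux: I must show $f^N \in (x^{p^e},y^{p^e},z^{p^e})$ whenever $N/p^e > 1 - 1/p$, equivalently whenever $N \ge p^e - p^{e-1}$ — wait, more precisely for $N$ in the range $(p^e - p^{e-1}, p^e)$. The strategy is to expand $f^N = (f^{p})^{\lfloor N/p \rfloor}\cdot f^{N \bmod p}$ and use that, modulo $p$-th powers of the variables, $f^p \equiv f(x^p,y^p,z^p)$ up to terms in the ideal generated by... no: rather, write $N = p\cdot N' + r$ with $0 \le r < p$, and observe $f^N \equiv (f^{N'})^{(p)} \cdot f^r$ working with the Frobenius twist, reducing membership at level $e$ to membership at level $e-1$ for $N'$ together with control of $f^r$. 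The key vanishing input is that $f^{p-1}$ has \emph{no} monomial with all exponents $\le p-1$ except possibly $x^{p-1}y^{p-1}z^{p-1}$ (by degree), and that coefficient is $c_1 = 0$; so $f^{p-1} \in (x^p,y^p,z^p)$. Then for $r \le p-1$, and $N' $ arranged so that $f^{N'} \notin \mathfrak m^{[p^{e-1}]}$ requires $N' \le p^{e-1}-1$, one gets $N = pN' + r \le p(p^{e-1}-1) + (p-1) = p^e - 1$, but we need the sharper bound. The honest sharper claim is: $f^N \notin \mathfrak m^{[p^e]}$ forces $N \le p^e - p^{e-1} - \cdots$? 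No — I claim it forces $N \le p^e - p^{e-1} + (\text{lower order})$, and the exact supremum $1 - 1/p$ is achieved in the limit by taking $N = p^e - p^{e-1}$ hmm, $= p^{e-1}(p-1)$, with $N/p^e = (p-1)/p$ exactly. So the real statement is $FT(f) = (p-1)/p$ exactly, attained (in the sup) by $N = p^{e-1}(p-1)$, $e \to \infty$ — and one checks $f^{p^{e-1}(p-1)} = (f^{p-1})^{[p^{e-1}]}$-twist-ish $\notin \mathfrak m^{[p^e]}$ iff $f^{p-1} \notin \mathfrak m^{[p]}$, which is \emph{false} supersingularly. So attainment must come from a slightly different $N$: I expect $FT(f) = (p-1)/p$ is a \emph{supremum not a maximum}, witnessed by $N = p^e - p^{e-1} - 1$ type sequences, or by $N=p-1$ at higher twisted powers with a correction. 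The main obstacle, and where I would spend the most care, is exactly this: pinning down the sharp combinatorial statement “$f^N \notin \mathfrak m^{[p^e]} \iff$ [explicit condition on base-$p$ digits of $N$ and the non-vanishing of certain coefficients of powers of $f$]”, and then showing that supersingularity ($c_1 = 0$) collapses the achievable ratios to have supremum exactly $1 - 1/p$. I would handle this by a careful induction on $e$, expanding $f^N$ via the Frobenius twist and the multinomial theorem, reducing mod $(x^{p^e}, y^{p^e}, z^{p^e})$, and isolating the finitely many coefficients of low powers of $f$ that control the base case — with the Hasse-invariant vanishing being the one nontrivial arithmetic fact that forces the drop from $1$ to $1 - 1/p$.
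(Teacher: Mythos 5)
Your ordinary-case sketch is fine and is a genuinely workable alternative to the paper's route: instead of computing the diagonal coefficient of $f^{p^e-1}$ directly via the factorization $H\{n_e\}=H\{n_1\}^{1+p+\cdots+p^{e-1}}$ of Deuring polynomials, you lift a surviving monomial from level $e$ to level $e+1$ by multiplying by $f^{p-1}$; the needed no-cancellation check works because every exponent of a contributing monomial of $f^{p-1}$ must be $\equiv p-1 \pmod p$ and the three exponents sum to $3(p-1)$, forcing $x^{p-1}y^{p-1}z^{p-1}$. Likewise your supersingular upper bound is essentially the paper's: by degree counting the only monomial of $f^{p-1}$ outside $\mathfrak m^{[p]}$ could be $x^{p-1}y^{p-1}z^{p-1}$, whose coefficient is (a unit times) the Hasse invariant, hence zero, so $f^{p-1}\in\mathfrak m^{[p]}$ and $FT(f)\le 1-\tfrac1p$.

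The genuine gap is the supersingular lower bound $FT(f)\ge 1-\tfrac1p$, which you explicitly defer (``I would handle this by a careful induction on $e$\dots''); this is the actual content of the theorem and your plan contains no mechanism that would make such an induction close. Since $f^{p-1}\in\mathfrak m^{[p]}$, every diagonal-type witness dies at every level ($f^{(p-1)p^{e-1}}\in\mathfrak m^{[p^e]}$), so one must exhibit a specific \emph{off-diagonal} monomial of $f^N$, for $N=p^e-p^{e-1}-1$, with all exponents below $p^e$ and provably nonzero coefficient. The paper does this with its Main Technical Lemma: writing $N=n+m$ with $n=n_1p^{e-1}$ and $m=(n_1-1)p^{e-1}+p^{e-1}-1$, the coefficient of $x^{2m}y^{2n}z^{n+m}$ is $\pm\binom{N}{n}H\{m\}(a)$, the binomial is a unit by Lucas, and $H\{m\}=H\{p-1\}^{1+p+\cdots+p^{e-2}}H\{n_1-1\}^{p^{e-1}}$, so nonvanishing at the supersingular value $a$ reduces to the arithmetic facts that $H\{n_1\}$ shares no root with $H\{p-1\}=(\lam-1)^{p-1}$ and, crucially, none with $H\{n_1-1\}$ — the latter proved via the antiderivative identity $(1-\lam)F'+2nF=H\{n\}$ and the second-order differential equation forcing $F$ to have simple roots. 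Your proposal never identifies which coefficients of which powers of $f$ must be shown nonzero, nor supplies any input beyond $c_1=0$; but $c_1=0$ alone cannot yield the lower bound (it is the hypothesis working against you), and without the coprimality of $H\{n_1\}$ and $H\{n_1-1\}$ (or an equivalent fact) the ``finitely many coefficients of low powers of $f$'' you invoke are not under control. As written, the supersingular case establishes only $FT(f)\le 1-\tfrac1p$, not equality.
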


Bhatt and Singh provide a couple of proofs in \cite{BBelliptic} using a translation into local cohomology;
Generalizations can be found in \cite{HNWZ16}. In contrast, our approach involves directly investigating the form of $f$ raised to integer powers using a generalized formula of the well known polynomial $H_p(\lam)=\sum_{i=0}^n {m\choose i}^2\lam^i$ with $m=(p-1)/2$, used to compute the Hasse invariant. See also \cite{MUL1},\cite{MUL2}. 

Going back to the characteristic zero case, for infinitely many primes $p$, the reduction of an elliptic curve mod $p$ is ordinary (e.g. over $\QQ$,  see\cite[Exercise 5.11]{SILV}). So we see that not only the $F$-pure threshold approaches the log canonical threshold, but it actually equals the log canonical threshold for infinitely many primes. For a general polynomial, this remains an open question (see some progress \cite{HerLog})

\subsection*{Acknowledgments}
This article is part of my Ph.D. thesis, which is being written under the direction of Karen Smith of University of Michigan. I would like to thank Prof. Smith for many useful discussions. Many thanks to Prof. Daniel Hern\'{a}ndez for his remarks on the earlier draft and to Prof. Michael Zieve, Prof. Sergey Fomin and Prof. Bhargav Bhatt for fruitful conversations. Finally, I would like to thank the referee of the Journal of Algebra for their helpful comments.

\section{Discussion}
Let $K$ denote a field of prime characteristic $p>2$. Let $f\in K[x,y,z]$ be homogeneous polynomial of degree three with an isolated singularity. Let $E\sbs \P^2$ be the elliptic curve defined by $f$. Note that the supersingularity of $E$ and the value of $FT(f)$ are invariant under passing to the algebraic closure $\overline{K}$, and under change of coordinates. So \twlog we assume $K$ is algebraically closed and change coordinates so $f$ is in its Legendre form:
$$
f_a(x,y,z) = y^2z - x(x-z)(x-a z),\,\,\, a \in K - \{0,1\}
$$
By letting $a$ range over $K - \{0,1\}$ we are addressing all possible elliptic curves in $\P^2$ up to isomorphism. Thus, it suffices to prove the \nameref{mainThm} for this one-parameter family of polynomials.

 Working with $f_a$ allows us to assert supersingularity by a simple computation on $a$.  We are going to work with the following, as proven in \cite[IV, Corollary 4.22]{HARTS}. 
\begin{prop}\labelt{ssDef} Let $K$ be a field of prime characteristic $p>2$.
 Let $f_a(x,y,z) = y^2z - x(x-z)(x-a z) \in K[x,y,z]$, with $a \in K-\{0,1\}$. Let $E\sbs \P^2$ be the elliptic curve defined by $f_a$. Then $E$ is supersingular \tiff over $K$:
 $$
\sum_{i=0}^m {m\choose i}^2a^i = 0, \text{ with }m=(p-1)/2,
 $$
 that is, \tiff $a$ is a root of the polynomial
 $$
 H_p(\lam) = \sum_{i=0}^m {m\choose i}^2\lam^i,\text{ with }m=(p-1)/2
 $$ in $K[\lam]$.
 Otherwise, $E$ is ordinary. 
\end{prop}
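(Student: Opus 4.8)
The plan is to reduce \autoref{ssDef} to the classical formula for the Hasse invariant and then to finish with a one-line binomial computation. Set $g(x)=x(x-1)(x-a)$, so that $E$ is the smooth plane cubic with affine model $y^2=g(x)$, put $m=(p-1)/2$, and expand $g(x)^m=\sum_{i\ge 0}c_i x^i\in K[x]$. The fact I would quote --- it is proved by evaluating the Cartier operator $\mathcal C$ on the invariant differential $dx/y$, which by Serre duality is dual to $F^\ast$ on $H^1(E,\O_E)$; see \cite[IV.4]{HARTS} --- is that $\mathcal C(dx/y)=c_{p-1}^{1/p}\,dx/y$, so that $F^\ast$ vanishes on the one-dimensional space $H^1(E,\O_E)$, i.e.\ $E$ is supersingular, if and only if $c_{p-1}=0$. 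The reason no other coefficient intervenes is that $\deg g^m=\tfrac{3}{2}(p-1)<2p-1$ for $p>2$, so $p-1$ is the only exponent congruent to $-1$ modulo $p$ that can occur in $g^m$.

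It then remains to identify $c_{p-1}$ with $H_p(a)$ up to a unit. Factoring $g(x)^m=x^m\,(x-1)^m(x-a)^m$, the coefficient $c_{p-1}$ of $x^{p-1}=x^{2m}$ is the coefficient of $x^m$ in $(x-1)^m(x-a)^m$. Multiplying out the two binomial expansions and collecting the monomials $x^j x^{\ell}$ with $j+\ell=m$ gives
\[
c_{p-1}=\sum_{j=0}^m\binom{m}{j}\binom{m}{m-j}(-1)^{m-j}(-a)^{j}
=(-1)^m\sum_{j=0}^m\binom{m}{j}^2 a^j=(-1)^m H_p(a),
\]
using $\binom{m}{m-j}=\binom{m}{j}$ and the fact that $(-1)^{m-j}(-1)^j=(-1)^m$ is independent of $j$. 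Since $(-1)^m$ is a unit in $K$, we conclude $c_{p-1}=0$ if and only if $H_p(a)=0$, that is, if and only if $a$ is a root of $H_p(\lam)$ in $K[\lam]$; the ``otherwise, $E$ is ordinary'' clause is then just the definition of ordinary.

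The single genuine obstacle is the Hasse-invariant input, namely that supersingularity of $E$ is detected by the coefficient of $x^{p-1}$ in $g(x)^m$. I would simply cite this from \cite{HARTS} rather than reprove it, since a self-contained derivation passes through Serre duality and the $p^{-1}$-semilinear algebra of the Cartier operator, which is heavier machinery than this note wants to invoke. Everything after that reduction is the short convolution displayed above, and the only point requiring care there is the sign bookkeeping, which --- as noted --- collapses to a single global factor $(-1)^m$.
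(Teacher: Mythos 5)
Your argument is correct, but it is a genuinely different route from the paper's, because the paper does not prove this proposition at all: it simply quotes the full criterion as \cite[IV, Corollary 4.22]{HARTS}. You instead cite only the more primitive input --- the Hasse-invariant criterion that $E\colon y^2=g(x)$ is supersingular iff the coefficient of $x^{p-1}$ in $g(x)^{(p-1)/2}$ vanishes, obtained via the Cartier operator and Serre duality (this is \cite[IV.4]{HARTS}, or \cite[V.4]{SILV} in the Cartier-operator form you sketch) --- and then identify that coefficient with $(-1)^m H_p(a)$ by hand. Your elementary part is sound: the degree bound $\deg g^m=\tfrac{3}{2}(p-1)<2p-1$ does ensure that $k=p-1$ is the only exponent $\equiv -1 \pmod p$ occurring, and the convolution with the global sign $(-1)^m$ is right. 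It is worth observing that this convolution is precisely the paper's \bref{hassePolyGeneral} in dehomogenized form with $n=m$ (compare \bref{hassePoly}), so within the paper you could invoke that lemma rather than redo the expansion. What your approach buys is a proof that isolates exactly which piece of cohomological machinery is being taken on faith, making the reduction to $H_p(a)$ transparent; what the paper's wholesale citation buys is economy, since its elementary novelty lies in analyzing $H\{m\}(a)$ for the powers $f_a^N$ later on, and the coefficient computation you perform reappears there anyway.
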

\noindent In particular, if $a$ is transcendental over $\overline{\FF_p}$, the polynomial $f_a\in K[x,y,z]$ always defines an ordinary elliptic curve.\\ 

It turns out that when investigating integer powers of $f_a$, one gets coefficients similar to the form of $H_p(a)$, as we prove later in the \nameref{hassePolyGeneral}. This motivates the following definition:
 \begin{deff}\labelt{ghp} Let $n \in \ZZ_{\geq 0}$. Define the following polynomial in $\ZZ[\lam]$:
 $$
 H\left\{n\right\}(\lam): = \sum_{i=0}^n {n\choose i}^2\lam^i
 $$
 \end{deff}
 Following \cite{MOR}, we call it the \textit{Deuring Polynomial}\footnote{Arguably it first appeared in \cite{DUR}} of degree $n$. When the indeterminant $\lam$ is understood from the context we omit it and write $H\{n\}$. We often abuse notation and write $H\{n\}\in \FF_p[\lam]$ for the natural image mod $p$. For an odd prime $p$, the polynomial $H\left\{\frac{p-1}{2}\right\}$ is $H_p(\lam)$ and plays an important role in number theory, as we saw in \bref{ssDef}. We shall dedicate the next section to investigate the connection of $H\{n\}$ to our problem and prove interesting properties of it.
 
 To make notation more compact, for a fixed $p$ and a nonnegative integer $e$ we define:
 \begin{equation}\labelt{noneeq}
\begin{array}{llll}
&N_e &=& p^e-1\\
n_e = &N_e/2 &=& \frac{p^e-1}{2},
\end{array}
 \end{equation}
Specifically, when $e=1$ we have:
$$
n_1 = \frac{p-1}{2}.
$$
Using \bref{ssDef} we can rewrite the \nameref{mainThm} in a more computationally-friendly version:

\begin{theorem}[\textbf{Main Theorem V2}]\labelt{mainThmv2}
 Let $K$ denote a field of prime characteristic $p>2$. Let $f_a(x,y,z) = y^2z - x(x-z)(x-a z) \in K[x,y,z]$, with $a \in K-\{0,1\}$. Let $n_1=(p-1)/2$.
 Then:
 \[ FT(f_a)=\left\{
\begin{array}{ll}
1& \text{ if } H\{n_1\}(a)\not\equiv 0 \pmod p\\
1-\frac{1}{p}& \text{ if } H\{n_1\}(a)\equiv 0 \pmod p
\end{array}\right.
\]
\end{theorem}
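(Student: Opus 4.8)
The plan is to turn the $F$-pure threshold into a statement about which monomials of $f_a^N$ survive modulo a Frobenius power of the maximal ideal. For $q=p^e$ set $\nu(q):=\max\{N\in\ZZ_{\geq 0}\mid f_a^N\notin(x^q,y^q,z^q)\}$. Raising to $p$-th powers sends a monomial with all exponents $\leq q-1$ to one with all exponents $\leq pq-1$, so $\nu(pq)\geq p\,\nu(q)$; hence $\nu(p^e)/p^e$ is non-decreasing and $FT(f_a)=\lim_{e\to\infty}\nu(p^e)/p^e$. Moreover $f_a=y^2z-x^3+(a+1)x^2z-axz^2$ is homogeneous of degree $3$, so every monomial of $f_a^N$ has degree $3N$; a monomial with all exponents $\leq q-1$ forces $3N\leq 3(q-1)$, whence $\nu(q)\leq q-1$ and $FT(f_a)\leq 1$ always. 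From here I would read off the coefficient of a cleverly chosen monomial of $f_a^N$ straight from the multinomial expansion of $f_a$.

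In the ordinary case $H\{n_1\}(a)\not\equiv 0\pmod p$, the \emph{only} monomial of degree $3(q-1)$ with all exponents $\leq q-1$ is $(xyz)^{q-1}$, and by the generalized Deuring polynomial computation of \nameref{hassePolyGeneral} its coefficient in $f_a^{q-1}$ is, modulo $p$, a nonzero scalar times $H\{n_e\}(a)$. Since $n_e=\tfrac{p^e-1}{2}$ has every base-$p$ digit equal to $n_1=\tfrac{p-1}{2}$, Lucas' theorem gives $H\{n_e\}(a)\equiv\prod_{j=0}^{e-1}H\{n_1\}(a^{p^j})\pmod p$; and as $H\{n_1\}=H_p\in\FF_p[\lam]$, its root set in $\overline{\FF_p}$ is Frobenius-stable, so $H\{n_1\}(a)\not\equiv 0$ forces $H\{n_1\}(a^{p^j})\not\equiv 0$ for every $j$, hence $H\{n_e\}(a)\not\equiv 0$. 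Thus $(xyz)^{q-1}$ survives in $f_a^{q-1}$, so $\nu(q)=q-1$ and $FT(f_a)=1$. In the supersingular case $H\{n_1\}(a)\equiv 0\pmod p$ the same input gives the upper bound: the coefficient of the unique top monomial $(xyz)^{p-1}$ of $f_a^{p-1}$ vanishes, so $f_a^{p-1}\in(x^p,y^p,z^p)$, and using $(A+B+C)^{p^{e-1}}=A^{p^{e-1}}+B^{p^{e-1}}+C^{p^{e-1}}$ in characteristic $p$ we get $f_a^{p^e-p^{e-1}}=(f_a^{p-1})^{p^{e-1}}\in(x^{p^e},y^{p^e},z^{p^e})$, hence $\nu(p^e)\leq p^e-p^{e-1}-1$ and $FT(f_a)\leq 1-\tfrac1p$.

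It remains to prove, in the supersingular case, the matching lower bound $\nu(p^e)\geq p^e-p^{e-1}-1$, i.e. to exhibit for each $e$ a monomial of $f_a^{\,p^e-p^{e-1}-1}$ with all exponents $\leq p^e-1$ and coefficient nonzero in $K$; this is the heart of the argument and the step I expect to be the main obstacle. The naive ``write the exponent in base $p$ and multiply surviving monomials digit by digit'' argument breaks down precisely because, in the supersingular case, $f_a^{p-1}$ has \emph{no} monomial with all exponents $\leq p-1$, so one must track how the few overflowing exponents of such factors line up against the small exponents coming from lower digits. Concretely I would look among the near-diagonal monomials of $f_a^{\,p^e-p^{e-1}-1}$: whenever the exponents can be chosen so that the inner summation in the multinomial coefficient collapses to a single term, the coefficient becomes an explicit multinomial coefficient times a power of $a$ and of $(1+a)$ — nonzero by a Lucas-theorem computation together with the standing hypotheses $a\neq 0,1$ (and $a=-1$, which is supersingular only for $p\equiv 3\pmod 4$, is handled by the monomials on which the $(1+a)$-term does not occur). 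When no single-term monomial is available, the relevant coefficient should reduce, via \nameref{hassePolyGeneral} and Lucas (using $H\{p-1\}(\lam)\equiv(\lam-1)^{p-1}\pmod p$), to a nonzero multiple of a Deuring polynomial $H\{m\}$ evaluated at a Frobenius twist of $a$, and the point becomes that this $H\{m\}$ has no common root with $H_p=H\{n_1\}$ in $\overline{\FF_p}$ — which should follow from the classical separability of the supersingular polynomial once $H_p'$ is expressed through smaller Deuring polynomials. Either route produces the desired monomial, giving $\nu(p^e)=p^e-p^{e-1}-1$ and therefore $FT(f_a)=1-\tfrac1p$, which together with the ordinary case completes the proof.
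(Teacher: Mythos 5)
Your ordinary case and your supersingular upper bound are correct and essentially the paper's argument: the only degree-$3(q-1)$ monomial with all exponents $\leq q-1$ is $(xyz)^{q-1}$, its coefficient is $\pm\binom{2n_e}{n_e}H\{n_e\}(a)$ by \bref{hassePolyGeneral}, and $H\{n_e\}(a)=\bigl(H\{n_1\}(a)\bigr)^{1+p+\cdots+p^{e-1}}$ (your product over Frobenius twists is the same identity, since $H\{n_1\}$ has $\FF_p$-coefficients); your trick of raising the containment $f_a^{p-1}\in(x^p,y^p,z^p)$ to the power $p^{e-1}$ is a legitimate substitute for the paper's appeal to the criterion behind \bref{upLowBound}. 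The genuine gap is exactly where you flag it: the supersingular lower bound is only a plan, and it is the heart of the theorem. You never specify which monomial of $f_a^{p^e-p^{e-1}-1}$ you will use, so nothing is actually verified. The paper's choice is $N=n+m$ with $n=n_1p^{e-1}$ and $m=(p-1)(1+p+\cdots+p^{e-2})+(n_1-1)p^{e-1}$, targeting $x^{2m}y^{2n}z^{n+m}$; one then checks $\max(2n,2m,n+m)=(p-1)p^{e-1}<p^e$, that $\binom{n+m}{n}\not\equiv 0$ by Lucas (the base-$p$ digits of $n$ and $m$ add without carrying), and, via \bref{hasseLucas}, that $H\{m\}=H\{p-1\}^{1+\cdots+p^{e-2}}H\{n_1-1\}^{p^{e-1}}$, reducing everything to two root-separation statements: $H\{n_1\}(1)\neq 0$ (since $H\{p-1\}=(\lam-1)^{p-1}$) and that $H\{n_1\}$ and $H\{n_1-1\}$ have no common root.

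That last statement is the crucial missing lemma, and your proposed justification does not yet prove it. Separability of $H_p=H\{n_1\}$ (Igusa) by itself says nothing about common roots with $H\{n_1-1\}$; you would need an explicit identity expressing $\lam(\lam-1)H_p'$ through $H\{n_1\}$ and $H\{n_1-1\}$ (a contiguity-type relation), which you neither state nor prove, and you would still have to rule out the points $\lam=0,1$. The paper instead works with the formal antiderivative $F$ of $H\{n_1-1\}$: the combinatorial identity $(1-\lam)F'+2n_1F=H\{n_1\}$ of \bref{pascalConnection} gives $(H\{n_1\},H\{n_1-1\})=(F,F')$ as ideals, and the second-order equation $4\lam(\lam-1)F''+8\lam F'+F=0$ of \bref{diffOp} forces any repeated root of $F$ to lie in $\{0,1\}$, which are then excluded by direct evaluation; this yields \bref{FsimpleNoSharedRoots}. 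Your alternative ``single-term collapse'' route (coefficients that are multinomials times powers of $a$ and $1+a$, with a side case $a=-1$) is speculative and does not produce a specific surviving monomial either. Until you either prove the no-common-root lemma (by the antiderivative/ODE argument or by an honest Legendre contiguity identity) and pin down the monomial with its Lucas check, the supersingular lower bound, and hence half of the theorem, remains unproved.
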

\noindent When $H\{n_1\}(a)\not\equiv 0 \pmod p$, we say that $f_a$ is ordinary. Otherwise we say that $f_a$ is supersingular. \\

The next section is dedicated to develop the required machinery. Afterwards we prove  \nameref{mainThmv2} directly.

\begin{rmk} The Deuring polynomials $H\{m\}$ are closely related to the Legendre polynomials araising as solutions to the Legendre differential equation. Legendre polynomials are of importance to many physical problems, including finding the gravitational potential of a point mass, as in Legendre's original work \cite{LEG1785}. Indeed, If $P_m(x)$ denotes the $m\mth$ Legendre polynomial then:
$$
H\{m\}(\lam)=(1-\lam)^mP_m\left(\frac{1+\lambda}{1-\lambda}\right), 
$$
as follows by a simple substitution and a known ``textbook" formula for the Legendre polynomials (\cite[Exercise 2.12]{KOEPF}); this is pointed out in \cite{CULL14} and \cite{BrillMor04}. In section 3, we establish several properties of Deuring polynomials, which can also be deduced from analogous facts about Legendre polynomials. We include direct algebraic proofs not relying on typical analytic techniques such as orthogonality in function spaces. In this way, we keep our paper self-contained and, we hope, more straightforward than relying on the vast literature on Legendre polynomials. 
\end{rmk}

\section{Deuring Polynomials and Machinery}

We first recall some well known techniques for working in characteristics $p$. Fix a prime $p$. Every integer $N$ can be written \textit{uniquely} in its \textit{base $p$-expansion} ($p$-expansion for short) as follows: fix a power $e$ \st $N<p^{e+1}$. Then there exist unique integers $0\leq a_0,...,a_{e}\leq p-1$ \st:
$$
N = a_{0}p^0 + a_{1}p^1 + ... + a_{e}p^e
$$
We recall how to compute binomial and multinomial coefficients mod $p$. 
\begin{theorem}[\textbf{Lucas's Theorem}]\labelt{luc} [See \cite{LUC} and \cite{DIC}] Let $\textbf{k}=(k_1,...,k_n)\in \NN^n$ and set $N=k_1+...+k_n$. Fix a prime $p$. Let $e$ be an integer \st $N<p^{e+1}$. Write each of the $k_i$ in its base $p$-expansion:
$$
k_i = a_{i0}p^0 + a_{i1}p^1 + ... + a_{ie}p^e
$$
(some $a_{ij}$'s may be 0). Also write $N$ in its base $p$-expansion:
$$
N = b_{0}p^0 + b_{1}p^1 + ... + b_{e}p^e
$$
 Then the multinomial coefficient ${N \choose \textbf{k}}$ satisfies:
$$
{N \choose \textbf{k}} = \frac{N!}{k_1!\cdots k_n!} \equiv {b_0 \choose a_{10}\,a_{20}\,...\,a_{n0}}{b_1 \choose a_{11}\,a_{21}\,...\,a_{n1}}\cdots {b_e \choose a_{1e}\,a_{2e}\,...\,a_{ne}} \pmod p,
$$
 with the convention that if $a_{1j}+...+a_{nj}>b_j$ then ${b_j \choose a_{1j}\,a_{2j}\,...\,a_{nj}}=0$. Specifically, ${N \choose \textbf{k}}  \not \equiv 0 \pmod p$ \tiff the digits of the $p$-expansion of the $k_i$'s do not carry when added (if $a_{ij}$ is the $j\mth$ digit of $k_i$, the last condition amounts to: for all $0\leq j\leq e$,  $0\leq a_{1j} + ... + a_{nj}=b_j<p$).
\end{theorem}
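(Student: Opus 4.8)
The plan is to prove this by a generating-function argument in $\FF_p[x_1,\dots,x_n]$, comparing two expansions of $(x_1+\cdots+x_n)^N$ modulo $p$. On one hand, the multinomial theorem over $\ints$ gives
\[
(x_1+\cdots+x_n)^N = \sum_{\mathbf{k}} {N \choose \mathbf{k}}\, x_1^{k_1}\cdots x_n^{k_n},
\]
the sum over $\mathbf{k}=(k_1,\dots,k_n)\in\NN^n$ with $k_1+\cdots+k_n=N$; reducing mod $p$, the coefficient of $x_1^{k_1}\cdots x_n^{k_n}$ in this polynomial, now viewed in $\FF_p[x_1,\dots,x_n]$, is the residue of ${N\choose\mathbf{k}}$.

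On the other hand, writing the base-$p$ expansion $N = b_0 + b_1 p + \cdots + b_e p^e$, I would factor
\[
(x_1+\cdots+x_n)^N = \prod_{j=0}^e \bigl[(x_1+\cdots+x_n)^{p^j}\bigr]^{b_j}
\]
and invoke the Frobenius identity $(x_1+\cdots+x_n)^{p^j} = x_1^{p^j}+\cdots+x_n^{p^j}$ in $\FF_p[x_1,\dots,x_n]$ (immediate from $(u+v)^p=u^p+v^p$ by induction on $j$ and on the number of variables). Expanding each factor $\bigl(x_1^{p^j}+\cdots+x_n^{p^j}\bigr)^{b_j}$ by the multinomial theorem and multiplying the results, $(x_1+\cdots+x_n)^N$ becomes, modulo $p$, a sum over tuples $(\mathbf{c}_0,\dots,\mathbf{c}_e)$ with each $\mathbf{c}_j=(c_{1j},\dots,c_{nj})\in\NN^n$ satisfying $c_{1j}+\cdots+c_{nj}=b_j$, of the terms $\prod_{j=0}^e {b_j \choose \mathbf{c}_j}\, x_1^{\sum_j c_{1j}p^j}\cdots x_n^{\sum_j c_{nj}p^j}$.

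The heart of the argument is then to extract the coefficient of the fixed monomial $x_1^{k_1}\cdots x_n^{k_n}$ from this last sum and match it with the multinomial coefficient. A tuple $(\mathbf{c}_0,\dots,\mathbf{c}_e)$ contributes precisely when $\sum_j c_{ij}p^j = k_i$ for every $i$; since $0\le c_{ij}\le b_j \le p-1$, this displays a valid base-$p$ expansion of $k_i$, so by uniqueness of the $p$-expansion we are forced to take $c_{ij}=a_{ij}$, the $j$th digit of $k_i$. Hence there is at most one contributing tuple, and it genuinely contributes $\prod_{j=0}^e {b_j \choose a_{1j}\,\cdots\,a_{nj}}$ exactly when every constraint $a_{1j}+\cdots+a_{nj}=b_j$ holds, i.e. under the no-carry condition; if it fails for some $j$ the index set is empty and the coefficient is $0$, matching the stated convention ${b_j\choose a_{1j}\,\cdots\,a_{nj}}=0$ when $a_{1j}+\cdots+a_{nj}>b_j$. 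Comparing with the first expansion yields the congruence for ${N\choose\mathbf{k}}$, and the final ``specifically'' clause is then the immediate observation that a product of integers is $\not\equiv 0 \pmod p$ iff each factor is, i.e. iff $a_{1j}+\cdots+a_{nj}\le p-1$ for all $j$.

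I expect the only delicate point — more bookkeeping than a genuine obstacle — to be this coefficient extraction: one must note that the digit bound $c_{ij}\le b_j\le p-1$ is exactly what licenses the appeal to uniqueness of base-$p$ expansions, and one must check that in the presence of a carry the set of contributing tuples is empty, so that the right-hand side vanishes in precisely the cases the convention prescribes. Everything else reduces to the multinomial theorem and the Frobenius (``freshman's dream'') identity.
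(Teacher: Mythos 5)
Your proof is correct, and it is worth noting that the paper does not prove this statement at all: Lucas's Theorem is quoted there as a known result with citations to Lucas and Dickson. Your argument is the standard and complete one — compare coefficients of $x_1^{k_1}\cdots x_n^{k_n}$ in $(x_1+\cdots+x_n)^N$ over $\FF_p$, computed once by the multinomial theorem and once after factoring along the base-$p$ digits of $N$ and applying Frobenius — and the delicate step you flag is handled correctly: the bound $c_{ij}\le b_j\le p-1$ legitimizes the appeal to uniqueness of base-$p$ expansions, so at most one digit tuple can contribute. The only point I would make fully explicit is the matching with the stated convention in the carrying case: when a carry occurs, some positions $j$ may have $a_{1j}+\cdots+a_{nj}<b_j$ rather than $>b_j$, but since $\sum_j\bigl(\sum_i a_{ij}\bigr)p^j=N=\sum_j b_jp^j$, the digit sums cannot all be $\le b_j$ with one strict, so there is necessarily some position with $a_{1j}+\cdots+a_{nj}>b_j$; that factor is $0$ by the convention, which matches your observation that the contributing index set is empty and hence ${N \choose \textbf{k}}\equiv 0 \pmod p$. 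With that one sentence added, the proof is complete, including the final ``no carrying'' criterion, since each genuine factor ${b_j \choose a_{1j}\,\cdots\,a_{nj}}$ with $b_j<p$ is a unit mod $p$.
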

Due to  \nameref{luc}, a multinomial coefficient is 0 \tiff for some $j$, the $j\mth$ digit of $N$ is not the sum of the of the $j\mth$ digits of the $k_i$'s.

\bigskip
The next lemma shows that understanding the Deuring polynomial $H\{n\}$ of \bref{ghp} is crucial for the discussion.

\begin{lem}[\textbf{Main Technical Lemma}]\labelt{hassePolyGeneral}
 Let $f_\lam=y^2z - x(x-z)(x-\lam z)$ and let $N=n+m$ be an integer. Then the coefficient of $x^{2m}y^{2n}z^{n+m}$ in $f_\lam^N$ is ${N \choose n} H\{m\}(\lam)$, up to sign.
\end{lem}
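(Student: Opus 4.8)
The plan is to expand $f_\lam^N$ directly by the binomial theorem. Write $f_\lam = y^2z + g$ with $g = -x(x-z)(x-\lam z)$, so that
\[
f_\lam^N = \sum_{k=0}^{N} \binom{N}{k}\, (y^2z)^k\, g^{\,N-k}.
\]
The crucial observation is that the variable $y$ appears only in the factor $(y^2z)^k$, and there with exponent $2k$; hence the monomial $x^{2m}y^{2n}z^{n+m}$ can occur only in the summand with $2k = 2n$, i.e. $k = n$ and $N-k = m$. Consequently the sought coefficient equals $\binom{N}{n}$ times the coefficient of $x^{2m}y^{2n}z^{n+m}$ in
\[
(y^2z)^n\, g^{\,m} = (-1)^m\, y^{2n} z^{n}\, x^m\,(x-z)^m\,(x-\lam z)^m ,
\]
and, after cancelling the prefactor $y^{2n}z^{n}x^m$, this is $(-1)^m$ times the coefficient of $x^m z^m$ in $(x-z)^m (x-\lam z)^m$.

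I would then use homogeneity: $(x-z)^m(x-\lam z)^m$ is homogeneous of degree $2m$ in $x$ and $z$, so the coefficient of $x^m z^m$ in it equals the coefficient of $x^m$ in the one-variable polynomial $(x-1)^m (x-\lam)^m$. Expanding $(x-1)^m = \sum_i \binom{m}{i}(-1)^{m-i} x^i$ and $(x-\lam)^m = \sum_j \binom{m}{j}(-\lam)^{m-j} x^j$, collecting the contributions with $i+j = m$, and using the symmetry $\binom{m}{m-i} = \binom{m}{i}$, one gets
\[
\sum_{i=0}^{m} \binom{m}{i}^2 (-1)^{m-i}(-\lam)^{i} \;=\; (-1)^m \sum_{i=0}^{m}\binom{m}{i}^2 \lam^i \;=\; (-1)^m\, H\{m\}(\lam).
\]
Multiplying together the factor $\binom{N}{n}$, the $(-1)^m$ coming from $g^m$, and this last $(-1)^m$, the two signs cancel and the coefficient of $x^{2m}y^{2n}z^{n+m}$ in $f_\lam^N$ is exactly $\binom{N}{n}\, H\{m\}(\lam)$, which is the assertion (the ``up to sign'' in the statement being $+1$ for this normalization of $f_\lam$).

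Every step is an elementary manipulation, so I do not anticipate a genuine obstacle. The only points needing care are the exponent bookkeeping that forces $k = n$ and matches the residual $x$- and $z$-degrees, and the reindexing $j = m-i$ combined with $\binom{m}{m-i} = \binom{m}{i}$, which is precisely what turns the product of two binomial expansions into the sum of squares $\sum_i \binom{m}{i}^2 \lam^i$ defining the Deuring polynomial.
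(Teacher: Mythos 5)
Your proposal is correct and follows essentially the same route as the paper: isolate the unique binomial-theorem term with $(y^2z)^n$ (forced by the $y$-degree), which contributes $\binom{N}{n}$, and then extract the coefficient of $x^mz^m$ from $(x-z)^m(x-\lam z)^m$ by pairing the two expansions, obtaining $(-1)^mH\{m\}(\lam)$ so that the signs cancel against the $(-1)^m$ from $(-x(x-z)(x-\lam z))^m$. Your bookkeeping is in fact slightly more precise than the paper's, which only records the answer ``up to sign.''
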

\begin{proof}
Observe $(y^2z - x(x-z)(x-\lam z))^{n+m}$. Since $y$ is only in the left term, we need to raise it to the power of $n$. This gives the binomial coefficient ${n+m \choose n}$. So it is left to identify the coefficient of $x^{2m}z^{m}$ in $(- x(x-z)(x-\lam z))^{m}=(-1)^{m}x^{m}(x-z)^m(x-\lam z)^{m}$. The latter allows us to just compute the coefficient of $x^{m}z^{m}$ in $(x-z)^{m}(x-\lam z)^{m}$. Notice:
$$
(x-z)^{m}(x-\lam z)^{m} = \left(\sum_{i=0}^{m} {m \choose i} (-1)^{m-i}x^{i}z^{m-i}\right) \left(\sum_{j=0}^{m} {m\choose j} (-\lam)^{j}x^{m-j}z^{j}\right).
$$
For the coefficient of $x^{m}z^{m}$ we need to set $i=j$, so we end up with:
$$
(-1)^{m} \sum_{i=0}^{m} {m \choose i}^2 \lam^{i} = (-1)^mH\{m\}.
$$
Together, up to sign, we get ${n+m \choose n}H\{m\}$. 
\end{proof}

\begin{cor}\labelt{hassePoly}
 Let $f_\lam=y^2z - x(x-z)(x-\lam z)$ and let $N=2n$. So the coefficient of $x^{2n}y^{2n}z^{2n}$ in $f^N$ is ${2n \choose n} H\{n\}(\lam)$ up to sign.
\end{cor}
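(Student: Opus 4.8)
The plan is to deduce this immediately as the $m = n$ case of the \nameref{hassePolyGeneral}. In the notation of \bref{hassePolyGeneral}, write $N = n + m$ and specialize $m = n$; then $N = 2n$ and the monomial $x^{2m}y^{2n}z^{n+m}$ becomes exactly $x^{2n}y^{2n}z^{2n}$. The lemma asserts that the coefficient of this monomial in $f_\lam^N$ is ${N \choose n} H\{m\}(\lam)$ up to sign, which under $m = n$, $N = 2n$ is precisely ${2n \choose n} H\{n\}(\lam)$ up to sign. This is exactly the claim.

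Since no new input is required, there is essentially no obstacle here; the only points to verify are bookkeeping ones — that the exponents in the corollary's monomial genuinely coincide with the $m = n$ specialization of the lemma's monomial, and that $H\{n\}$ here refers to the Deuring polynomial of \bref{ghp}. If one preferred a self-contained argument, one could simply rerun the proof of \bref{hassePolyGeneral} with $m = n$: pulling out $y^{2n}$ forces the term $y^2 z$ to be chosen $n$ times, producing the factor ${2n \choose n} z^n$, and one is then reduced to finding the coefficient of $x^{2n} z^n$ in $\bigl(-x(x-z)(x-\lam z)\bigr)^n$, hence of $x^n z^n$ in $(x-z)^n (x-\lam z)^n$, which equals $(-1)^n \sum_{i=0}^n {n \choose i}^2 \lam^i = (-1)^n H\{n\}(\lam)$. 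Invoking the lemma directly is the cleaner route and is the one I would take.
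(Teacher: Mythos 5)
Your proof is correct and matches the paper's: the corollary is exactly the $m=n$ specialization of the \nameref{hassePolyGeneral}, which is all the paper's proof does. The optional self-contained rerun of the lemma's argument with $m=n$ is also fine but unnecessary.
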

\begin{proof}
Apply the \nameref{hassePolyGeneral} with $m=n$. 
\end{proof}

The \nameref{hassePolyGeneral} motivates us to investigate the roots of $H\{n\}$ in characteristics $p$.

\begin{lem}\labelt{pMinusOneHass}
Let $p$ be a prime. Then $H\{p-1\}\in \FF_p[\lam]$ is $(\lam-1)^{p-1}$.
\end{lem}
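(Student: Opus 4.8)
The plan is to reduce everything to the elementary congruence ${p-1\choose i}\equiv(-1)^i\pmod p$ for $0\leq i\leq p-1$; once this is in hand, the squares in $H\{p-1\}$ all collapse to $1$ and the resulting geometric sum telescopes to a power of $\lam-1$ via the Frobenius (``freshman's dream'') identity. All manipulations will be carried out as identities of polynomials in $\FF_p[\lam]$.

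First I would establish ${p-1\choose i}\equiv(-1)^i\pmod p$. The quickest route is to work in $\FF_p[\lam]$: since $(1+\lam)^p=1+\lam^p$ there, we get $(1+\lam)^{p-1}=\frac{1+\lam^p}{1+\lam}=\sum_{i=0}^{p-1}(-1)^i\lam^i$, and comparing the coefficient of $\lam^i$ on both sides gives the claim. Alternatively, one computes ${p-1\choose i}=\prod_{k=1}^{i}\frac{p-k}{k}\equiv\prod_{k=1}^{i}(-1)=(-1)^i\pmod p$ directly. (Note that \nameref{luc} as stated only tells us this coefficient is nonzero mod $p$, not its precise value, so one does need the short direct computation.)

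Next, squaring gives ${p-1\choose i}^2\equiv 1\pmod p$ for every $0\leq i\leq p-1$, so in $\FF_p[\lam]$
\[
H\{p-1\}(\lam)=\sum_{i=0}^{p-1}{p-1\choose i}^2\lam^i=\sum_{i=0}^{p-1}\lam^i.
\]
Finally I would identify this geometric sum: $(\lam-1)\sum_{i=0}^{p-1}\lam^i=\lam^p-1=(\lam-1)^p$ in $\FF_p[\lam]$, the last equality again being the Frobenius identity; cancelling the nonzerodivisor $\lam-1$ in the integral domain $\FF_p[\lam]$ yields $\sum_{i=0}^{p-1}\lam^i=(\lam-1)^{p-1}$, hence $H\{p-1\}=(\lam-1)^{p-1}$ in $\FF_p[\lam]$, as desired.

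There is essentially no hard step here; the only point requiring care is that every identity above is an identity of polynomials in $\FF_p[\lam]$ --- so that comparing coefficients and cancelling $\lam-1$ are both legitimate --- rather than merely an identity of the associated functions on $\FF_p$.
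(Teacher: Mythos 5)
Your proof is correct and follows essentially the same route as the paper: show that ${p-1\choose i}\equiv(-1)^i\pmod p$ so the squared coefficients are all $1$, then identify $\sum_{i=0}^{p-1}\lam^i=\frac{\lam^p-1}{\lam-1}=(\lam-1)^{p-1}$ via the Frobenius identity. The only (minor) difference is how the congruence is obtained --- you compute it directly, while the paper deduces the $\pm1$ pattern from \nameref{luc} applied to the $p\mth$ row together with the Pascal identity; your direct computation is arguably cleaner, since Lucas alone indeed only gives nonvanishing rather than the exact sign.
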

\begin{proof}
The coefficients of $H\{p-1\}(\lam)$ are the squares of the numbers appearing on the $(p-1)\mth$ row in Pascal's Triangle mod $p$. Due to \nameref{luc}, the $p\mth$ row starts and ends with $1$, while the rest of the entries are zero. Ergo, the $(p-1)\mth$ row consists of $\pm 1$'s due to the identity:
\begin{equation}\labelt{pascalIden}
{n-1 \choose i-1} + {n-1 \choose i} = {n \choose i}.
\end{equation}
For illustration, here are the $(p-1)\mth$ and the $p\mth$ rows of Pascal's Triangle:
$$
\begin{array}{rcccccccccccccccccccccccc}
p-1:&&&1&&-1&&1&&-1&&...&&-1&&1&&-1&&1\\
p:&&1&&0&&0&&0&&...&&...&&0&&0&&0&&1
\end{array}
$$
So using the geometric series formula we get: 
$$
H\{p-1\}=1+\lam+...+\lam^{p-1} = \frac{\lam^p-1}{\lam-1} = (\lam-1)^{p-1}
$$
\end{proof}

\begin{lem}\labelt{hasseLucas}\footnote{This lemma was formulated by Schur in the context of Legendre polynomials, which are closely related to the Deuring polynomials. However, the first published proof is due to Wahab(\cite{WH52}) half a decade later. We provide a simple proof in the context of Deuring polynomials.} Fix a prime $p$. 
Let $H\{n\}\in \FF_p[\lam]$. Write the $p$-expansion of $n$:
$$
n=b_0p^0+b_1p^1+...+b_ep^e.
$$

Then
$$
H\{n\} = H\{b_0\}^{1}H\{b_1\}^{p^1}H\{b_2\}^{p^2}\cdots H\{b_e\}^{p^e}
$$
\end{lem}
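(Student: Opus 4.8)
The plan is to expand the claimed right-hand side using the Frobenius endomorphism of $\FF_p[\lam]$ and then compare coefficients with the left-hand side via \nameref{luc}. First I would note that every coefficient of $H\{b_j\}$ lies in the prime field $\FF_p$, hence is fixed by the $p^j$-power map, so raising to the $p^j$ power is just the substitution $\lam\mapsto\lam^{p^j}$:
$$
H\{b_j\}^{p^j}=\sum_{a=0}^{b_j}{b_j\choose a}^2\lam^{ap^j}.
$$
Multiplying these out for $j=0,\dots,e$ gives
$$
\prod_{j=0}^e H\{b_j\}^{p^j}=\sum_{(a_0,\dots,a_e)}\Big(\prod_{j=0}^e {b_j\choose a_j}^2\Big)\lam^{a_0+a_1p+\cdots+a_ep^e},
$$
where the sum ranges over all tuples $(a_0,\dots,a_e)$ with $0\le a_j\le b_j$ for each $j$.

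Next I would observe that since each $b_j\le p-1$, the constraint $0\le a_j\le b_j$ forces $0\le a_j\le p-1$; therefore $(a_0,\dots,a_e)$ is exactly the base $p$-expansion of the exponent $i:=a_0+a_1p+\cdots+a_ep^e$, and distinct tuples produce distinct exponents. Conversely, for any $i$ with base $p$-digits $(a_0,\dots,a_e)$, \nameref{luc} gives ${n\choose i}\equiv\prod_j{b_j\choose a_j}\pmod p$, with the convention that the product is $0$ as soon as some $a_j>b_j$. Squaring, the coefficient of $\lam^i$ in the displayed product is precisely ${n\choose i}^2\bmod p$: it equals $\prod_j{b_j\choose a_j}^2$ when all $a_j\le b_j$, and is $0$ otherwise, matching the Lucas convention.

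Consequently $\prod_{j=0}^e H\{b_j\}^{p^j}=\sum_{i=0}^{p^{e+1}-1}{n\choose i}^2\lam^i$ in $\FF_p[\lam]$; since $n<p^{e+1}$ and ${n\choose i}=0$ for $i>n$, the right side is $\sum_{i=0}^n{n\choose i}^2\lam^i=H\{n\}$, which is the assertion. The only step that really demands care — and the one I would highlight as the crux — is the index bookkeeping in the middle paragraph: one must invoke $b_j\le p-1$ to know that the multi-index $(a_j)$ genuinely records the base-$p$ digits of $i$, so that no two terms in the expanded product collide and Lucas's theorem applies term by term; everything else is a routine Frobenius computation.
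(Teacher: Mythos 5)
Your proof is correct and follows essentially the same route as the paper's: expand the product of the $H\{b_j\}^{p^j}$, use the uniqueness of base-$p$ expansions (possible since $b_j\le p-1$) to see that no two terms collide, and match each coefficient with ${n\choose i}^2$ via Lucas's theorem, with the $a_j>b_j$ case handled by the Lucas convention. The only cosmetic difference is that you phrase the reduction of the coefficients as Frobenius fixing $\FF_p$, where the paper raises coefficients to the $p^j$ and then invokes Fermat's little theorem — the same step in different words.
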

\begin{proof}
Denote $f=H\{n\}$ and $g=H\{b_0\}^{1}H\{b_1\}^{p^1}\cdots H\{b_e\}^{p^e}$. First notice that $f$ and $g$ are of the same degree as $\deg f = n$ and $\deg g = b_0 + b_1p + b_2p^2 + ... + b_ep^e=n$. Fix $\lam^i$ and let us compare its coefficient in both $f$ and $g$. For $i=0$, the coefficient of $\lam^0$ is 1 in any Deuring polynomial, and so in $f$ and in $g$. Now fix $0<i\leq n$. In $f$, the coefficient is
$$
{ n \choose i}^2.
$$
To compute the coefficient in $g$, write $i$ in its base $p$-expansion:
$$
i=a_0p^0+a_1p^1+...+a_ep^e,
$$
so
$$
\lam^i=\lam^{a_0p^0}\lam^{a_1p^1}\cdots \lam^{a_ep^e}.
$$

Note that the largest power $e$, as appears in the expansion of $n$, is sufficient as $i\leq n$. Notice that the powers of $\lam$ in $H\{b_j\}^{p^j}$ can only be $\{0p^j,1p^j, 2p^j, ..., b_jp^j\}$. So if $j_1\neq j_2$ then the set of powers in $H\{b_{j_1}\}^{p^{j_1}}$ and in $H\{b_{j_2}\}^{p^{j_2}}$ are disjoint except for $0$. It is easy to see that by picking one monomial in each of factors of $g$ and multiplying them together, one gets a monomial $\lam^{\ell}$ in $g$ where the different chosen factors ``spell out" the $p$-expansion of $\ell$. Due to uniqueness of the $p$-expansion of $i$, there is only one possible combination of terms in the different $H\{b_j\}^{p^j}$s that can yield the monomial $$
\lam^i=\lam^{a_0p^0}\lam^{a_1p^1}\cdots \lam^{a_ep^e}.
$$ Namely, we need to follow the $p$-expansion of $i$ and choose $\lam^{a_0}$ from $H\{b_0\}(\lam)^{p^0}$, $\lam^{a_1p}$ from $H\{b_1\}(\lam)^{p^1}$ and so on. 
\[
\begin{array}{ccccccccc}
g = &H\{b_0\}^{1}&H\{b_1\}^{p^1}&H\{b_2\}^{p^2}&\cdots &H\{b_e\}^{p^e}\\
\lam^i = & \lam^{a_0p^0}&\lam^{a_1p^1}&\lam^{a_2p^2}&\cdots& \lam^{a_ep^e}
\end{array}
\]

Ergo, if $a_j\leq b_j$ for all $1\leq j \leq e$, then $\lam^i$ appears in $g$ with a coefficient of:
$$
{b_0 \choose a_0}^{2}{b_1 \choose a_1}^{2p}\cdots {b_e \choose a_e}^{2p^e}.
$$
By Fermat's little theorem, the expression is:
$$
{b_0 \choose a_0}^{2}{b_1 \choose a_1}^{2}\cdots {b_e \choose a_e}^{2},
$$ which is precisely the coefficient of $\lam^i$ in $f$ due to \nameref{luc}.
Otherwise, if for some $j$, $a_j>b_j$, then $\lam^i$ is not in $g$, and its coefficient in $f$ is 0 as well since $i$ and $n-i$ are carrying in the $j\mth$ digit when added and thus ${ n \choose i}=0$. 
\end{proof}

\begin{cor}\labelt{neHasse} In characteristic $p$:
$$
H\left\{\frac{p^e-1}{2}\right\} = H\left\{\frac{p-1}{2}\right\}^{1+p+...+p^{e-1}}
$$
\end{cor}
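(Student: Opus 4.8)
The plan is to derive this directly from \bref{hasseLucas} by writing down the base-$p$ expansion of $n_e=\frac{p^e-1}{2}$. The first step is to note that because $p$ is odd, $n_1=\frac{p-1}{2}$ is an integer with $0\le n_1\le p-1$, so it is a legitimate single base-$p$ digit. Next I would record the elementary identity
$$
\frac{p^e-1}{2} \;=\; \frac{p-1}{2}\cdot\bigl(1+p+p^2+\cdots+p^{e-1}\bigr)\;=\;\sum_{j=0}^{e-1} n_1\,p^j,
$$
which exhibits the base-$p$ expansion of $n_e$: its digits $b_0,\dots,b_{e-1}$ are all equal to $n_1$, and all higher digits vanish.

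Then I would invoke \bref{hasseLucas} with $n=n_e$. Since $H\{0\}=1$, the vanishing higher digits contribute only trivial factors, and the lemma yields
$$
H\{n_e\} \;=\; \prod_{j=0}^{e-1} H\{b_j\}^{p^j} \;=\; H\{n_1\}^{1+p+\cdots+p^{e-1}}
$$
in $\FF_p[\lam]$, which is exactly the assertion once we recall $n_e=\frac{p^e-1}{2}$ and $n_1=\frac{p-1}{2}$.

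I do not anticipate any real obstacle: the entire content is the arithmetic observation that the base-$p$ digits of $\frac{p^e-1}{2}$ are constant and equal to $\frac{p-1}{2}$, which in turn relies only on the standing hypothesis $p>2$ (so that $\frac{p-1}{2}$ is an integer and a legal digit). Once that is in place the corollary is an immediate specialization of \bref{hasseLucas}.
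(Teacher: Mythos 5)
Your proposal is correct and follows exactly the paper's argument: write $\frac{p^e-1}{2}$ in its base-$p$ expansion via the geometric series, observing every digit equals $\frac{p-1}{2}$ (a legal digit since $p>2$), and then apply \bref{hasseLucas}. No gaps.
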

\begin{proof}
We apply \bref{hasseLucas} after writing $\frac{p^e-1}{2}$ in its $p$-expansion and using geometric series formula:
$$
\frac{p^e-1}{2} = \frac{p-1}{2}(1+p+...+p^{e-1}) =  \frac{p-1}{2} + \frac{p-1}{2}p + ...+\frac{p-1}{2}p^{e-1}
$$
\end{proof}

Recall that we denote $n_e=(p^e-1)/2$ and then $n_1=(p-1)/2$. We can rewrite \bref{neHasse} as
$$
H\left\{n_e\right\} = \left(H\left\{n_1\right\}\right)^{1+p+...+p^{e-1}}
$$
Note that $H\left\{n_1\right\}$ is the polynomial appearing in \bref{ssDef}, so it has an important role in the context of our \nameref{mainThm}. 

\bigskip
In our proof of the \nameref{mainThmv2} we will encounter another polynomial: $H\{n_1-1\}$. We shall now investigate it.

\begin{lem}\labelt{pascalConnection} Fix an integer $n$.
Let $F(\lam)\in \QQ[\lam]$ the formal antiderivative of the polynomial $H\{n-1\}(\lam)$ with constant coefficient $0$. We denote $H\{n-1\} = F'$. Then
$$
 (1-\lam)F' + 2nF = H\{n\}.
$$ Note that this equality holds characteristic zero and thus in all positive characteristics $p > n$.
\end{lem}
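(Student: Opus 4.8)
The plan is to verify the identity by directly comparing the coefficients of $\lam^j$ on both sides, since every polynomial involved has explicit binomial-coefficient entries. Writing $F' = H\{n-1\} = \sum_{i=0}^{n-1}{n-1\choose i}^2\lam^i$, the antiderivative with zero constant term is $F = \sum_{j=1}^{n}\frac{1}{j}{n-1\choose j-1}^2\lam^j$, so that
$$
2nF = \sum_{j=1}^{n}\frac{2n}{j}{n-1\choose j-1}^2\lam^j,\qquad (1-\lam)F' = \sum_{j=0}^{n}\left({n-1\choose j}^2 - {n-1\choose j-1}^2\right)\lam^j,
$$
with the usual convention ${n-1\choose -1}={n-1\choose n}=0$. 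Hence the coefficient of $\lam^j$ on the left-hand side is
$$
c_j := {n-1\choose j}^2 - {n-1\choose j-1}^2 + \frac{2n}{j}{n-1\choose j-1}^2 \qquad (1\le j\le n),
$$
and $c_0 = 1$; the task is to check $c_j = {n\choose j}^2$ for all $0\le j\le n$. Note that the term with denominator $j$ only occurs for $j\ge 1$, so no division by zero arises.

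For $1\le j\le n$ I would use the Pascal-type rescalings ${n-1\choose j} = \frac{n-j}{n}{n\choose j}$ and ${n-1\choose j-1} = \frac{j}{n}{n\choose j}$ to factor ${n\choose j}^2$ out of $c_j$:
$$
c_j = \frac{{n\choose j}^2}{n^2}\Bigl((n-j)^2 - j^2 + 2nj\Bigr) = \frac{{n\choose j}^2}{n^2}\cdot n^2 = {n\choose j}^2,
$$
the middle step being the elementary expansion $(n-j)^2 - j^2 + 2nj = n^2$. The endpoint cases are then immediate: $c_0 = {n-1\choose 0}^2 = 1 = {n\choose 0}^2$, and $c_n = {n-1\choose n}^2 - {n-1\choose n-1}^2 + \tfrac{2n}{n}{n-1\choose n-1}^2 = 0 - 1 + 2 = 1 = {n\choose n}^2$ (consistent with the general formula once one notes $\frac{2n}{j}\big|_{j=n}$ contributes the $2$). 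This establishes $(1-\lam)F' + 2nF = H\{n\}$ in $\QQ[\lam]$.

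Finally, since the identity is an equality in $\QQ[\lam]$ whose only denominators are the integers $1,2,\dots,n$ (from the antiderivative and the factor $\frac{2n}{j}$), it reduces to a valid identity in $\FF_p[\lam]$ for every prime $p > n$; this is the closing remark of the statement and needs nothing further. I do not foresee a genuine obstacle: the computation is routine, and the only points requiring care are the bookkeeping of the boundary coefficients at $j=0$ and $j=n$ and the convention on out-of-range binomial coefficients. The single "idea" worth isolating is to divide through by ${n\choose j}^2/n^2$, after which the whole identity collapses to $(n-j)^2 - j^2 + 2nj = n^2$.
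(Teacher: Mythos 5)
Your proof is correct and takes essentially the same route as the paper: both verify the identity by writing out $F$, $F'$ explicitly and comparing the coefficient of $\lam^j$ on each side, with the same care at $j=0$ and $j=n$. The only cosmetic difference is the final simplification: the paper rewrites $\frac{2n}{j}{n-1\choose j-1}^2$ via $\frac{n}{j}=\frac{n-j}{j}+1$ and completes the square $\bigl({n-1\choose j-1}+{n-1\choose j}\bigr)^2={n\choose j}^2$ using Pascal's rule, whereas you factor out ${n\choose j}^2/n^2$ through the absorption identities and collapse everything to $(n-j)^2-j^2+2nj=n^2$; both computations are valid.
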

\begin{proof}
Let us give a specific formula for $F(\lam)$:
$$
F(\lam)  = \sum_{i=0}^{n-1} {n-1 \choose i}^2 (i+1)^{-1} \lam^{i+1} = \sum_{i=1}^{n}{n-1 \choose i-1}^2 (i)^{-1} \lam^{i}.
$$ 
Now, observe:
$$
(1-\lam)H\{n-1\} + 2nF = \sum_{i=0}^{n-1} {n-1 \choose i}^2\lam^i - \sum_{i=0}^{n-1} {n-1 \choose i}^2\lam^{i+1} + 2n \sum_{i=1}^{n}{n-1 \choose i-1}^2 (i)^{-1} \lam^{i}.$$
Shift the index of the middle sum to get:

\begin{equation}\labelt{alltems}
=\sum_{i=0}^{n-1} {n-1 \choose i}^2\lam^i - \sum_{i=1}^{n} {n-1 \choose i-1}^2\lam^{i} +  \sum_{i=1}^{n}2{n-1 \choose i-1}^2 \frac{n}{i} \lam^{i}.
\end{equation}

For $i=0$, we get that only the leftmost sum contributes a constant coefficient, which is 1 as required. Now consider the case where $1\leq i \leq n$. We need the following identity to simplify the rightmost sum: 
$$
2{n-1 \choose i-1}^2 \frac{n}{i} = 2{n-1 \choose i-1}^2 \frac{n-i+i}{i}=2{n-1 \choose i-1}^2 \left(\frac{n-i}{i}+1\right) = 2{n-1 \choose i-1}{n-1 \choose i} + 2{n-1 \choose i-1}^2.
$$ So when $i$ is fixed, the coefficient of $\lam^i$ in (\ref{alltems}) is
$$
{n-1 \choose i}^2  - {n-1 \choose i-1}^2 + 2{n-1 \choose i-1}{n-1 \choose i} + 2{n-1 \choose i-1}^2
$$
Combining like terms simplifies as:
$$
{n-1 \choose i-1}^2 + 2{n-1 \choose i-1}{n-1 \choose i}  + {n-1 \choose i}^2, $$
which further simplifies as:
$$=\left({n-1 \choose i-1}+{n-1 \choose i}\right)^2 ={n \choose i}^2
$$
using the known identity (\ref{pascalIden}).
So we conclude:
$$
 (1-\lam)H\{n-1\} + 2nF = H\{n\}.
$$
\end{proof}

\begin{lem}\labelt{diffOp} Fix a prime $p>2$. Recall $n_1=(p-1)/2$. Then the following holds over any field $K$ in characteristic $p$:
\begin{enumerate}
\item Let $F(\lam)\in K[\lam]$ be the formal antiderivative of the polynomial $H\{n_1-1\}(\lam)$ with constant coefficient $0$. Then $F$ has no repeated roots. 
\item $H\{n_1\}\in K[\lam]$ has no repeated roots. Further, $\lam=0,1$ are not roots of $H\{n_1\}$.
\end{enumerate}
\end{lem}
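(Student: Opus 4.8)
The plan is to derive both statements from the identity of \bref{pascalConnection}, namely $(1-\lam)F' + 2nF = H\{n\}$ with $n = n_1 = (p-1)/2$ and $F' = H\{n_1 - 1\}$; here $p > 2n_1 = p-1$, so the identity is valid in characteristic $p$. The strategy is the standard one for showing a polynomial satisfying a first-order linear ODE has simple roots: suppose $\beta$ is a repeated root of one of the polynomials and plug into the identity (and its derivative) to force a contradiction, using that $H\{n_1\}$ and $F$ are relatively prime to suitable linear factors.

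First I would handle part (2). Suppose $\beta \in \overline{K}$ is a repeated root of $H\{n_1\}$, so $H\{n_1\}(\beta) = 0$ and $H\{n_1\}'(\beta) = 0$. Differentiating the identity gives $-F' + (1-\lam)F'' + 2n_1 F' = H\{n_1\}'$, i.e. $(1-\lam)F'' + (2n_1 - 1)F' = H\{n_1\}'$. Evaluating the original identity at $\beta$ gives $(1-\beta)F'(\beta) + 2n_1 F(\beta) = 0$. I also want a second relation; the cleanest is to note that $F' = H\{n_1-1\}$, and to combine $H\{n_1\}(\beta) = 0$ with the recursion relating $H\{n_1\}$, $F'$, $F$ to pin down $F(\beta)$ and $F'(\beta)$ as both nonzero or to get a contradiction. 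Concretely, I expect that evaluating at $\beta$ yields a linear system in $F(\beta), F'(\beta)$ forcing $F(\beta) = F'(\beta) = 0$ unless $1 - \beta = 0$; but $\lam = 1$ is not a root of $H\{n_1\}$ since by \bref{pMinusOneHass} and \bref{hasseLucas} (or directly: $H\{n_1\}(1) = \sum \binom{n_1}{i}^2 = \binom{2n_1}{n_1} = \binom{p-1}{n_1} \not\equiv 0 \pmod p$ by \nameref{luc}), and $\lam = 0$ is not a root since $H\{n_1\}(0) = 1$. With $1 - \beta \neq 0$, the repeated-root hypothesis forces $F(\beta) = 0$; then from the differentiated identity at $\beta$ one gets $(1-\beta)F''(\beta) = 0$, hence $F''(\beta) = 0$, and iterating shows all derivatives of $F$ vanish at $\beta$, so $F$ is divisible by $(\lam - \beta)^{\deg F + 1}$, a contradiction since $F \neq 0$. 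This also shows $F$ has no repeated roots as a byproduct, giving part (1): if $\beta$ were a double root of $F$, then $F(\beta) = F'(\beta) = 0$, so the identity gives $H\{n_1\}(\beta) = 0$ and then the same bootstrap (now we need $\beta$ a root of $H\{n_1\}$ not equal to $0$ or $1$, already established) forces all higher derivatives of $F$ to vanish at $\beta$, again impossible.

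The main obstacle I anticipate is the bootstrap step: I need to be careful that in characteristic $p$ the repeated differentiation of the identity does not run into the usual trap where $(\lam - \beta)^k$ has vanishing derivative for $k$ a multiple of $p$. Since $\deg F = n_1 = (p-1)/2 < p$, no such pathology occurs — a nonzero polynomial of degree less than $p$ cannot have a root of multiplicity $\geq p$, and more to the point, if $F$ (of degree $< p$) had all of $F, F', F'', \dots$ vanishing at $\beta$ we would immediately conclude $F = 0$. So the degree bound $n_1 < p$ is exactly what makes the argument go through, and I would make that explicit. The only other point requiring care is confirming $\lam = 0, 1$ are non-roots of $H\{n_1\}$, which is the short computation with $\binom{p-1}{n_1} \pmod p$ indicated above.
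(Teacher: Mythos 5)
There is a genuine gap, and it lies exactly at the step you flagged as an expectation: the first-order identity of \bref{pascalConnection} is not strong enough to rule out repeated roots of either $H\{n_1\}$ or $F$. If $\beta$ is a double root of $H\{n_1\}$, evaluating the identity and its derivative at $\beta$ gives only the two relations $(1-\beta)F'(\beta)+2n_1F(\beta)=0$ and $(1-\beta)F''(\beta)+(2n_1-1)F'(\beta)=0$, which involve the three unknowns $F(\beta),F'(\beta),F''(\beta)$; nothing forces $F(\beta)=F'(\beta)=0$ (the first relation merely says $F(\beta)=(1-\beta)F'(\beta)$, since $2n_1\equiv-1$). Consequently the bootstrap never starts, and even if it did it would stall after one step: to push past $F''(\beta)=0$ you would need $H\{n_1\}''(\beta)=0$, $H\{n_1\}'''(\beta)=0$, etc., and the hypothesis only gives vanishing of $H\{n_1\}$ and its first derivative at $\beta$. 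The same defect kills your derivation of part (1): from $F(\beta)=F'(\beta)=0$ you get $H\{n_1\}(\beta)=0$, but the next step needs $H\{n_1\}'(\beta)=0$, which you do not have (and which would in fact be false if (2) holds). More structurally, an inhomogeneous first-order relation of this shape imposes no squarefreeness constraint at all: given any $\beta\neq 0,1$ one can choose a squarefree $F$ with $F(\beta)=(1-\beta)F'(\beta)$ and $F''(\beta)=-(2n_1-1)F'(\beta)/(1-\beta)$, making $(1-\lam)F'+2n_1F$ vanish to order two at $\beta$; so no purely formal manipulation of \bref{pascalConnection} can prove the lemma.

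The paper's proof uses a different, and essential, ingredient: \emph{homogeneous second-order} differential equations satisfied identically by $F$ and by $H\{n_1\}$, namely $4\lam(\lam-1)F''+8\lam F'+F=0$ (verified coefficient by coefficient, using $2n_1=p-1\equiv-1$) and the Picard--Fuchs operator for $H\{n_1\}$. Because these identities are homogeneous and equal to zero, a root $\beta$ of multiplicity $r\geq 2$ (with $r<p$, thanks to the degree bound $n_1<p$ that you correctly emphasized) lets one divide the whole identity by $(\lam-\beta)^{r-2}$ and evaluate at $\beta$, yielding $4\beta(\beta-1)\cdot(\text{unit})=0$; hence repeated roots can only occur at $\lam=0,1$, and these are excluded by the direct evaluations you did carry out correctly ($H\{n_1\}(0)=1$, $H\{n_1\}(1)=\binom{p-1}{n_1}\not\equiv 0$, and for $F$: $F'(0)=1$, $F'(1)=\binom{p-3}{n_1-1}\not\equiv 0$). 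To repair your argument you would need to produce such a second-order homogeneous relation (or an equivalent device, e.g.\ the hypergeometric/Legendre ODE satisfied by the Deuring polynomials); the first-order identity alone, however cleverly differentiated and evaluated, cannot do the job.
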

\begin{proof}\mbox{}
\begin{enumerate}

\item 
We will show that over $K$ $F=F(\lam)$ satisfies the following differential equation:
\begin{equation} \labelt{eqHnemo}
4\lam(\lam-1)F'' + 8\lam F' + F = 0,
\end{equation}
where $F',F''$ are the first and second derivatives \wrt $\lam$, respectively.  
Once we prove (\ref{eqHnemo}) we see that the only possible repeated roots of $F$ can be 0 or 1 by the following argument: Suppose $\alp$ is a root of $F$ of multiplicity $r\geq 2$. Since $\deg F = n_1 = (p-1)/2$, then $r<p$. So write
\[
\begin{array}{llcl}
F = &g_1(\lam) \cdot (\lam-\alp)^r&\text{ where }&g_1(\alp) \neq 0 ,\\
F' =&g_2(\lam) \cdot  (\lam-\alp)^{r-1}&\text{ where }&g_2(\alp)\neq 0 ,\\
F'' =&g_3(\lam) \cdot  (\lam-\alp)^{r-2}&\text{ where }&g_3(\alp) \neq 0 .\\
\end{array}
\]
Plug the above expression in (\ref{eqHnemo}) and divide by $(\lam-\alp)^{r-2}$ to get
$$
4\lam(\lam-1)g_3 + 8\lam (\lam-\alp)g_2 + (\lam-\alp)^2g_1 = 0 .
$$Plugging in $\lam=\alp$ gives:
$$
4\alp(\alp-1)g_3(\alp)= 0
$$ Since $p\neq 2$, 4 is a unit. We get:
$$
\alp(\alp-1)\equiv 0 \pmod p \then \alp=0,1 
$$i.e. the only possible repeated roots of $F$ are $\alp=0$ or $\alp=1$.

 While 0 is a root of $F$, it is simple since $F'(0)=H\{n_1-1\}(0)=1$. In addition, $\lam=1$ is not a root of $F'(\lam)$ as the following combinatorial identity (which holds over $\ZZ$) shows:
$$
F'(1)= H\{n_1-1\}(1)=\sum_0 ^{n_1-1} {n_1-1 \choose i}^2 = {2n_1-2 \choose n_1-1} = {p-3 \choose n_1-1},
$$
which is not zero in $K$ by \nameref{luc}.

All that is left to do is to show that the differential equation (\ref{eqHnemo}) holds. This can be done by checking the coefficient of $\lam^i$, $0\leq i \leq n_1$ in the different summands. Note that we are working over a field of characteristic $p$ so $2n_1=p-1=-1$. Also recall that we are using the convention that if $k<0$ then ${n\choose k}=0$:
\[
\begin{array}{llll}
\text{coefficient in } F:& {n_1-1 \choose i-1}^2 \frac{1}{i}\\
\text{coefficient in } F'=H\{n_1-1\}:& {n_1-1 \choose i}^2 \\
\text{coefficient in } 8\lam F':& {n_1-1 \choose i-1}^2 8\\
\text{coefficient in } 4\lam F'':& 4{n_1-1 \choose i}^2 (i) &=& 
{n_1-1 \choose i-1}^2 4\frac{(n_1-1-(i-1))^2(i)}{(i)^2} =  \\
&&=&{n_1-1 \choose i-1}^2 \frac{(2n_1-2i)^2}{(i)}
= {n_1-1 \choose i-1}^2 \frac{(-1-2i)^2}{(i)} \\
\text{coefficient in } 4\lam^2F'':& {n_1-1 \choose i-1}^2 4(i-1)
\end{array}
\]
Notice that for $i=0$, we have $i-1<0$ so all the coefficients are 0. Now compute the coefficient of $\lam^i$ with $i>0$ in 
$$
4\lam^2F'' -4\lam F'+8\lam F' + F.
$$
We get:
$$
\frac{{n_1-1 \choose i-1}^2}{i}\left(4(i-1)i - (-1-2i)^2 + 8i + 1\right) = \frac{{n_1-1 \choose i-1}^2}{i}\left(4i^2-4i-1-4i-4i^2+8i+1\right)=0.
$$
\item This is proved in \cite{IGUS58} (see also \cite[Theorem 4.1]{SILV}), and we provide a sketch. First we show that $H\{n_1\}$ satisfies a differential operator similar to \pref{eqHnemo} (which is called the \textit{Picard-Fuchs} operator):
$$
\calD_{PF} =  4\lam(1-\lam)\frac{d^2}{d\lam^2} + 4(1-2\lam) \frac{d}{d\lam} - 1
$$
One can get $\calD_{PF}$ by using an argument similar to the one above or  by simply taking a derivative of \pref{eqHnemo}. We then deduce that the only possible repeated roots are $\lam=0,1$. But since $H\{n_1\}(0)\neq 0$ and $H\{n_1\}(1)\neq 0 $ (can be computed directly), $H\{n_1\}(\lam)$ has no repeated roots in over $K$.
\end{enumerate}

\end{proof}
\begin{rmk}Fix any integer $n>1$ and let $F\in \QQ[\lam]$ be the antiderivative of $H\{n\}$ constant coefficient 0. We can compute a differential equation similar to (\ref{eqHnemo}) that $F$ satisfies and deduce properties of $F$'s roots. However, this is beyond the scope of this article. 
\end{rmk}

\begin{cor}\labelt{FsimpleNoSharedRoots}
Fix an integer $n\geq 1$ and a prime $p>\max\{2,n\}$. Let $K$ be a field of characteristic $p$. Let $F$ be the formal antiderivative of $H\{n-1\}$, both considered over $K$, with constant coefficient 0. Then $H\{n\}$ and $H\{n-1\}$, considered over $K$, share no roots \tiff $F$ has no repeated roots. In particular, $H\{n_1\},H\{n_1-1\}$ share no roots in characteristic $p$.
\end{cor}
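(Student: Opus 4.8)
The plan is to read off everything from the identity of \bref{pascalConnection}: since $p>n$, all the denominators $1,2,\dots,n$ occurring in $F$ are invertible mod $p$, so reduction mod $p$ commutes with forming the antiderivative, and the characteristic-zero identity
\[
(1-\lambda)F' + 2nF = H\{n\},\qquad F' = H\{n-1\},
\]
descends to $K$. Rearranged, $H\{n\} = (1-\lambda)H\{n-1\} + 2nF$, so $H\{n\}$ is an explicit $K[\lambda]$-combination of $H\{n-1\}$ and $F$, while $H\{n-1\}=F'$ is the derivative of $F$. This should be enough to convert common roots of the pair $H\{n\},H\{n-1\}$ into repeated roots of $F$ and back.

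First I would prove the ``only if'' direction. Let $\alpha\in\overline{K}$ be a common root of $H\{n\}$ and $H\{n-1\}$. Evaluating the identity at $\alpha$ gives $0=(1-\alpha)\cdot 0 + 2nF(\alpha)$, and since $p>n$ the element $2n$ is a unit, so $F(\alpha)=0$; together with $F'(\alpha)=H\{n-1\}(\alpha)=0$ this says $\alpha$ is a root of $F$ of multiplicity $\geq 2$. For the converse, if $\alpha$ is a repeated root of $F$ then $F(\alpha)=F'(\alpha)=0$, hence $H\{n-1\}(\alpha)=F'(\alpha)=0$ and $H\{n\}(\alpha)=(1-\alpha)\cdot 0 + 2n\cdot 0=0$, so $\alpha$ is a common root. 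Combining the two directions, $H\{n\}$ and $H\{n-1\}$ have a common root in $\overline{K}$ exactly when $F$ has a repeated root; equivalently they share no roots iff $F$ has none.

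For the ``in particular'' clause I would specialize $n=n_1=(p-1)/2$; the hypothesis $p>\max\{2,n_1\}$ is automatic since $n_1<p$ and $p>2$. Then $F$ is the antiderivative of $H\{n_1-1\}$, which has no repeated roots by \bref{diffOp}(1), so the equivalence just established yields that $H\{n_1\}$ and $H\{n_1-1\}$ share no roots over $K$. I do not expect a genuine obstacle: the only point needing care is that both the identity of \bref{pascalConnection} and the invertibility of $2n$ require $p>n$, which is exactly the standing hypothesis, so the reduction mod $p$ is harmless, and the rest is bookkeeping.
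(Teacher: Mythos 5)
Your proof is correct and follows essentially the same route as the paper: both rest on the identity of \bref{pascalConnection} together with the invertibility of $2n$, the paper packaging it as the ideal identity $(H\{n\},H\{n-1\})=((1-\lam)F'+2nF,F')=(F,F')$ while you run the same argument pointwise by evaluating at roots in $\overline{K}$. Your specialization to $n=n_1$ via \bref{diffOp}(1) likewise matches the paper's conclusion.
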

\begin{proof}
Consider the ideal $I=(H\{n\},H\{n-1\})$ in $K[\lam]$. From \bref{pascalConnection} we have:
$$
(H\{n\},H\{n-1\}) = ((1-\lam) F' + 2n F,F') = (2n F, F') = (F,F'),
$$where the last inequality holds since $2n$ is a unit in $\FF_p$ and thus in $K$. 
Therefore, $I$ is the unit ideal \tiff $F$ is has simple roots. From \bref{diffOp}(2) we see that for $n=n_1$, indeed $F$ has no repeated roots, thus for any $p$, $H\{n_1\},H\{n_1-1\}$ share no roots in characteristic $p$. 
\end{proof}

We end this section with two useful observations for computing $FT(f)$. Let $K$ be a field. Consider a polynomial $f\in K[x_1,...,x_t]$. Denote the monomial $x_1^{\mu_1}\cdots x_t^{\mu_t}$ by $\textbf{x}^{\bm{\mu}}$ where $\bm\mu$ is the multiexponent $[\mu_1,...,\mu_t]$. Similarly, for $s$ scalars in $K$, $b_1,...,b_s$, we denote $\textbf{b}=[b_1,...,b_s]$. Now, let $\textbf{x}^{\bm{\mu}_1},...,\textbf{x}^{\bm{\mu}_s}$ be the supporting monomials of $f$. Using the usual meaning of dot product we have:
$$
f = \textbf{b} \cdot [\textbf{x}^{\bm{\mu}_1}, ..., \textbf{x}^{\bm{\mu}_s}] = b_1\textbf{x}^{\bm{\mu}_1} + ...  + b_s\textbf{x}^{\bm{\mu}_s}.
$$ 
For a multi-exponent $\textbf{k}=[k_1,...,k_t]$ we denote $\max \textbf{k}$ as the maximal power in the multiexponent $\textbf{k}$, i.e. 
$$
\max \textbf{k}=\max [k_1,...,k_t] = \max_{1\leq i \leq t} k_i.
$$
Using this notation, we have the following straightforward way to produce upper and lower bounds for $FT(f)$:
\begin{lem}\labelt{upLowBound}
Let $R=K[x_1,...,x_t]$ where $K$ is a field of prime characteristic $p$, and let $f\in R$. Let $N$ be a positive integer. Raise $f$ to the power of $N$ and collect all monomials, so that:
\begin{equation} \labelt{eqFtoNcollected}
f^N = \sum_{\text{distinct multi-exponents } \textbf{k}} c_{\textbf{k}} \textbf{x}^{\textbf{k}}.
\end{equation}
Note that all but finitely many $c_{\textbf{k}}$'s are 0. 
 Fix $e\in \ZZ_{\geq 0}$ and consider $\frac{N}{p^e}$. Then:
\begin{enumerate}
\item $ \frac{N}{p^e}<FT(f) \iff \exists \textbf{k}$ \st $c_{\textbf{k}}\neq 0$ and $\max \textbf{k} < p^e$. Or, equivalently,
\item $FT(f)\leq \frac{N}{p^e} \iff \forall \textbf{k}$, either $c_{\textbf{k}}= 0$ or $\max \textbf{k} \geq p^e$.
\end{enumerate}
\end{lem}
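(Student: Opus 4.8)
Statements (1) and (2) are contrapositives of one another, so I will prove (1). Abbreviate $I_e=(x_1^{p^e},\ldots,x_t^{p^e})R$, the Frobenius power ideal, and let a \emph{monomial of} a polynomial $g$ mean a monomial appearing in $g$ with nonzero coefficient. The plan has three ingredients: a reduction to ideal membership, a monotonicity estimate, and a strictness estimate.

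\medskip
\emph{Reduction.} A polynomial lies in the monomial ideal $I_e$ if and only if each of its monomials does, and $\textbf{x}^{\textbf{k}}\in I_e$ exactly when some $k_i\geq p^e$, i.e. $\max\textbf{k}\geq p^e$. Hence, with $f^N$ expanded as in \pref{eqFtoNcollected}, the existence of $\textbf{k}$ with $c_{\textbf{k}}\neq 0$ and $\max\textbf{k}<p^e$ is precisely the statement $f^N\notin I_e$, so (1) becomes the equivalence
$$
f^N\notin I_e\iff \frac{N}{p^e}<FT(f).
$$
Throughout I use the characteristic-$p$ identity $g^{p^c}=\sum_{\textbf{j}}c_{\textbf{j}}^{\,p^c}\textbf{x}^{p^c\textbf{j}}$ for $g=\sum_{\textbf{j}}c_{\textbf{j}}\textbf{x}^{\textbf{j}}$. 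Since $\textbf{j}\mapsto p^c\textbf{j}$ is injective and multiplies $\max\textbf{j}$ by $p^c$, it follows that $g\in I_a\then g^{p^c}\in I_{a+c}$ and also $g\notin I_a\then g^{p^c}\notin I_{a+c}$. (The cases $f=0$ and $f$ a unit are trivial, so I assume $f\in(x_1,\ldots,x_t)R\setminus\{0\}$.)

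\medskip
\emph{Monotonicity: $f^N\in I_e$ forces $FT(f)\leq \frac{N}{p^e}$.} Let $\frac{M}{p^d}$ be any element of the set defining $FT(f)$, so $f^M\notin I_d$. Set $D=\max\{d,e\}$. Then $f^{Np^{D-e}}=(f^N)^{p^{D-e}}\in I_D$, hence $f^{\,j}\in I_D$ for every $j\geq Np^{D-e}$ since $I_D$ is an ideal; but $f^{Mp^{D-d}}=(f^M)^{p^{D-d}}\notin I_D$. Therefore $Mp^{D-d}<Np^{D-e}$, i.e. $\frac{M}{p^d}<\frac{N}{p^e}$, and taking the supremum over all such $\frac{M}{p^d}$ yields $FT(f)\leq\frac{N}{p^e}$. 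This is the contrapositive of the forward direction of the displayed equivalence.

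\medskip
\emph{Strictness: $f^N\notin I_e$ forces $\frac{N}{p^e}<FT(f)$.} Fix a monomial $\textbf{x}^{\textbf{k}}$ of $f^N$ with $\max\textbf{k}\leq p^e-1$, let $\delta$ be the largest exponent occurring among the monomials of $f=\sum_i b_i\textbf{x}^{\bm{\mu}_i}$, and fix a monomial $\textbf{x}^{\bm{\mu}}$ of $f$ with nonzero coefficient $b$. Choose $c$ with $p^c>\delta$ and expand
$$
f^{p^cN+1}=(f^N)^{p^c}\cdot f=\sum_{\textbf{j},\,i}c_{\textbf{j}}^{\,p^c}\,b_i\,\textbf{x}^{\,p^c\textbf{j}+\bm{\mu}_i},
$$
the sums ranging over the monomials $\textbf{x}^{\textbf{j}}$ of $f^N$ and the monomials $\textbf{x}^{\bm{\mu}_i}$ of $f$. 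Each entry of every $\bm{\mu}_i$ is $<p^c$ while each entry of $p^c\textbf{j}$ is divisible by $p^c$; comparing exponents modulo $p^c$ shows the monomials $\textbf{x}^{\,p^c\textbf{j}+\bm{\mu}_i}$ are pairwise distinct. So no cancellation occurs, $\textbf{x}^{\,p^c\textbf{k}+\bm{\mu}}$ survives with coefficient $c_{\textbf{k}}^{\,p^c}b\neq 0$, and $\max(p^c\textbf{k}+\bm{\mu})\leq p^c(p^e-1)+\delta\leq p^{c+e}-1<p^{c+e}$. Hence $f^{p^cN+1}\notin I_{c+e}$, so $\frac{p^cN+1}{p^{c+e}}\leq FT(f)$, and since $\frac{p^cN+1}{p^{c+e}}>\frac{p^cN}{p^{c+e}}=\frac{N}{p^e}$ we conclude $\frac{N}{p^e}<FT(f)$. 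Together with Monotonicity this proves (1), and (2) is its contrapositive.

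\medskip
The only step requiring an idea is \emph{Strictness}: one cannot merely multiply the witnessing power $f^N$ by an extra factor of $f$, because cancellation could eliminate every monomial all of whose exponents are below $p^e$. Passing first to a high $p$-power of $f^N$, so that all exponents become divisible by an integer larger than any exponent of $f$, makes cancellation impossible once the extra factor of $f$ is appended — that is the crux; the rest is the freshman's-dream identity together with bookkeeping about the supremum.
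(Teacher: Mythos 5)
Your proof is correct, but it takes a genuinely different route from the paper. The paper's proof is a one-liner: after the (same) observation that membership of $f^N$ in the monomial ideal $\mathfrak m^{[p^e]}$ is detected monomial-by-monomial, it simply cites \cite[Prop 3.26]{KSbasic} for the key equivalence $f^N\notin(x_1^{p^e},\dots,x_t^{p^e})R \iff \frac{N}{p^e}<FT(f)$. You instead reprove that equivalence from the bare definition \pref{FTdef}: your ``Monotonicity'' direction uses the Frobenius identity $g^{p^c}=\sum c_{\mathbf j}^{p^c}\mathbf x^{p^c\mathbf j}$ together with ideal absorption to compare any element $\frac{M}{p^d}$ of the defining set with $\frac{N}{p^e}$ at a common level $D$, and your ``Strictness'' direction supplies the genuinely non-obvious point (that non-membership gives a \emph{strict} inequality rather than just $\frac{N}{p^e}\leq FT(f)$) via the $f^{p^cN+1}=(f^N)^{p^c}\cdot f$ trick with $p^c$ larger than every exponent of $f$, so that exponents can be read off modulo $p^c$ and no cancellation occurs; this is essentially the standard argument behind the cited proposition. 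Both steps check out, including the bound $p^c(p^e-1)+\delta\leq p^{c+e}-1$. What your version buys is self-containment and an elementary proof in the spirit of the paper, and it in fact works without the restriction $\frac{N}{p^e}\in[0,1]$ appearing in the cited result; what it costs is length. One cosmetic slip: ``$f$ nonzero and not a unit'' does not imply $f\in(x_1,\dots,x_t)R$ (e.g. $f=1+x$), but since your argument never actually uses that containment, nothing is lost --- you could simply assume $f\neq 0$.
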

\begin{proof}
This is immediate from the definition (\ref{FTdef}) and from \cite[Prop 3.26]{KSbasic} which implies that for any $\frac{N}{p^e}\in [0,1]$, $$f^N \not\in (x_1^{p^e},...,x_t^{p^e})R \iff \frac{N}{p^e} < FT(f).$$
\end{proof}

\begin{lem}\labelt{betaBoundHomog} Let $f$ be a homogeneous polynomial of degree $d$ in $t$ variables. Let $\textbf{x}^{\textbf{k}}$ be a monomial in $f^N$ with a non-zero coefficient. Denote $\textbf{k}=[k_1,...,k_t]$. Then $k_1+...+k_t=dN$. Moreover, $\max \textbf{k}\geq Nd/t$ and if $\max \textbf{k} = Nd/t$ then $\textbf{k} = [Nd/t,Nd/t,...,Nd/t]$.
\end{lem}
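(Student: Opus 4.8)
The plan is to leverage the single fact that multiplication respects the grading on $K[x_1,\dots,x_t]$. Since $f$ is homogeneous of degree $d$, every supporting monomial $\textbf{x}^{\bm{\mu}_j}$ of $f$ satisfies $\mu_{j,1}+\dots+\mu_{j,t}=d$; hence $f^N$, being a sum of products of $N$ such monomials, is homogeneous of degree $dN$. A homogeneous polynomial of degree $dN$ is by definition a $K$-linear combination of monomials of total degree $dN$, so after collecting terms as in \eqref{eqFtoNcollected}, any $\textbf{x}^{\textbf{k}}$ with $c_{\textbf{k}}\neq 0$ must have $k_1+\dots+k_t=dN$. (This conclusion is insensitive to cancellation, which is the only thing that could go wrong in characteristic $p$.) This establishes the first assertion.

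Next I would deduce the bound on $\max\textbf{k}$ from the elementary inequality that the maximum of a finite list of nonnegative reals is at least its average:
\[
\max\textbf{k}=\max_{1\le i\le t}k_i\;\ge\;\frac{1}{t}\sum_{i=1}^{t}k_i=\frac{dN}{t}.
\]
For the equality statement I would invoke the standard equality case of this inequality. Set $\bar k=\frac{1}{t}\sum_{i}k_i$ and suppose $\max_i k_i=\bar k$. Then $k_i\le\bar k$ for every $i$, and if some $k_{i_0}<\bar k$ strictly, then $\sum_i k_i<t\bar k$, a contradiction. Hence $k_i=\bar k$ for all $i$, i.e. $\textbf{k}=[dN/t,\dots,dN/t]$ (which in particular forces $dN/t\in\ZZ$ since $\textbf{k}$ is an integer vector).

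I do not anticipate any real obstacle here: the only step needing a sentence of justification is that raising a homogeneous polynomial to the $N$-th power yields a homogeneous polynomial of degree $dN$, which is immediate from the grading on the polynomial ring, and the remainder is the textbook ``$\max\ge$ average, with equality iff constant'' fact applied to the exponent vector $\textbf{k}$.
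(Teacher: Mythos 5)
Your proposal is correct and follows essentially the same route as the paper: homogeneity of $f^N$ gives $k_1+\dots+k_t=dN$, and the ``max $\geq$ average, with equality only if all entries are equal'' argument gives the remaining two claims. You simply spell out in more detail what the paper states in three sentences.
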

\begin{proof}
 The first statement is immediate since any monomial of $f^N$ is of degree $dN$. Ergo, we cannot have that all $t$ entries of $\textbf{k}$ are less than $Nd/t$. Lastly, if $\max \textbf{k} = Nd/t$ but another power is less, then $k_1+...+k_t$ is less than $Nd$. 
\end{proof}

\section{Proof of The Main Theorem}
Now we are ready to prove the \nameref{mainThmv2}: 
\begin{proof}
Fix $p>2$. 
We first show that if $f_a$ is ordinary then $FT(f_a)$ is 1. 
Recall the notations: for an integer $e\geq 1$ we denote $$N_e=p^e-1$$ $$n_e = N_e/2 = (p^e-1)/2.$$ In particular,
$$
n_1 = \frac{p-1}{2}.
$$ Let us raise $f_a$ to the power of $N_e=p^e-1$. Due to \bref{hassePoly}  and \bref{betaBoundHomog} we get:
$$
f_a^N = \pm {2n_e \choose n_e}H\{n_e\}(a)x^{N_e}y^{N_e}z^{N_e} \text{mod }\mathfrak{m}^{[p^e]},
$$ where $\mathfrak{m}=(x,y,z)$ and $\mathfrak{m}^{[p^e]}=(x^{p^e},y^{p^e},z^{p^e})K[x,y,z]$.
By \bref{upLowBound}, if we show that ${2n_e \choose n_e}H\{n_e\}(a)\not \equiv 0 \pmod p$ for any $e$, then we get a lower bound of $N_e/p^e=\frac{p^e-1}{p^e}$ for $FT(f_a)$. By taking $e\to \infty$ we get that:
$$
\lim_{e\to \infty} \frac{p^e-1}{p^e} \leq FT(f_a) \leq 1 \then 1 = FT(f_a)
$$
So suffices to show that ${2n_e \choose n_e}H\{n_e\}(a)\not \equiv 0 \pmod p$.

First we deal with ${2n_e \choose n_e}$. We shall write both $2n_e$ and $n_e$ in their base $p$-expansion:
\[
\begin{array}{lrrrrrrrrrr}
2n_e = p^e-1 = &(p-1)p^1& +& (p-1)p^2& +&...&+& (p-1)p^{e-1}\\
n_e = &\frac{p-1}{2}p^1& +& \frac{p-1}{2}p^2& +&...&+& \frac{p-1}{2}p^{e-1}\\
\end{array}
\]
Since the digits of $n_e$ and $n_e$ are added without carrying to the digits of $2n_e$, by \nameref{luc} ${2n_e \choose n_e}\not \equiv 0 \pmod p$. 

Next, due to  \bref{neHasse}:
$$
 H\{n_e\}(a) = (H\{n_1\}(a))^{1+p+...+p^{e-1}}
$$
We conclude that $H\{n_e\}(a)\not \equiv 0 \pmod p$ since the polynomial is ordinary, which means that $H\{n_1\}(a)\not \equiv 0 \pmod p$.  This concludes the case where $f_a$ is ordinary.

\vskip 0.5cm

Now, we deal with the supersingular case. We shall prove that $1-1/p$ is both an upper and a lower bound for $FT(f_a)$. So fix $p>2$ and assume that $f_a$ is supersingular, i.e. that $a$ is a root of $H\{n_1\}$. We first establish $1-1/p$ as an upper bound. Let $N=p-1=2n_1$. Consider $f_a^N$ and apply \bref{hassePoly}. Because $f_a$ is supersingular, the coefficient of $x^{N}y^{N}z^{N}$ is 0 since this coefficient is a multiple of $H\{n_1\}(a)$. From \bref{betaBoundHomog}, all other monomials $\textbf{x}^\textbf{k}$ satisfy $\max \textbf{k}\geq N+1=p$. So apply \bref{upLowBound} to get an upper bound of 
$$
\frac{N}{p}=\frac{p-1}{p}=1-\frac{1}{p}
$$

As for the lower bound, fix $e\geq 1$. We will show that $\frac{p^e-p^{e-1}-1}{p^e}$ is a lower bound for all $e$, which yields a lower bound of $1-1/p$ by taking $e\to \infty$. Once we show that, the proof is complete.
We fix $e$ and set $N=p^e-p^{e-1}-1$, and we shall prove that $f^N \not \in \mathfrak{m}^{[p^e]}$. Notice that:
$$
N=p^e-p^{e-1}-1 =p^e-2p^{e-1} + p^{e-1} -1  = (p-2)(p^{e-1}) + p^{e-1} -1 = (n_1)(p^{e-1}) + (n_1-1)(p^{e-1}) + p^{e-1} -1.
$$
We set 
\[
\begin{array}{ll}
n=&(n_1)(p^{e-1})\\
m = &(n_1-1)(p^{e-1}) + p^{e-1} -1.
\end{array}
\]
 Notice that $m+1=n$.  

In order to show the lower bound, it suffices to compute the coefficient of $\textbf{x}^{2m,2n,n+m}$ in $f^N_a$ and show that it is non-zero, because:
$$
\max({2n,2m,m+n}) = 2n = (2n_1)(p^{e-1}) = (p-1)(p^{e-1})<p^e.
$$
From the \nameref{hassePolyGeneral} we get the coefficient of a critical term in $f^N$ is:
\begin{equation}\labelt{coeff}
{m+n \choose n}H\{m\}(a)
\end{equation}
We wish to prove that the coefficient (\ref{coeff}) is non-zero mod $p$. We shall break it to two parts, the binomial ${m+n \choose n}$, and the polynomials expression $H\{m\}(a)$. Let us start with the binomial coefficient. We write $m,n$ in their $p$-expansion while taking advantage of the geometric series formula:
\begin{equation}\labelt{mexpansion}
\begin{array}{lrrrrrrrrrr}
n = &(0)p^0& +& (0)p^1& +&...&+& (0)p^{e-2}&+&n_1p^{e-1}\\
m  =&(p-1)p^0& +& (p-1)p^1& +&...&+& (p-1)p^{e-2}&+&(n_1-1)p^{e-1}\\
\end{array}
\end{equation}
So when adding $m$ and $n$, the digits do not carry, as one invokes \nameref{luc} to observe that the binomial coefficient ${m+n \choose n}$ is non-zero .

We complete the proof that the coefficient (\ref{coeff}) is not zero by showing that $H\{m\}(a)$ is not zero mod $p$. Recall that by our supersingularity hypothesis $H\{n_1\}(a)\equiv 0 \pmod p$. So suffices to show that the polynomials $H\{n_1\}$ and $H\{m\}$ share no roots in characteristic $p$. Observe again the $p$-expansion of $m$ \pref{mexpansion}. Use \bref{hasseLucas} to deduce
$$
H\{m\} = H\{p-1\}^{1+p+...+p^{e-2}}H\{n_1-1\}^{p^{e-1}}
$$
So the problem is reduced to verifying that the irreducible factors of the polynomial $H\{n_1\}(\lam)\in \FF_p[\lam]$ are neither factors of $H\{p-1\}(\lam)\in \FF_p[\lam]$ nor of $H\{n_1-1\}(\lam)\in \FF_p[\lam]$. The problem does not depend on $e$. 

Let us start with $H\{p-1\}$. Recall \bref{pMinusOneHass}. Only $\lam=1$ is a root of $H\{p-1\}$ but $H\{n_1\}(1)$ is not zero due to \bref{diffOp}(1). 

It remains to compare the roots of $H\{n_1\}$ and $H\{n_1-1\}$. From \bref{FsimpleNoSharedRoots} we conclude that they share no roots, as required. This concludes the proof. 

\end{proof}

\begin{disc} For completeness, let us compute that $FT(f_a)=1/2$ for 
$$
f_a = y^2z+x(x+z)(x+a z),\, a\in K-\{0,1\}
$$ where $\mchar(K)=2$. From \bref{hasseLucas} we deduce that over $K$ and for any integer $m>0$, $H\{m\} = H\{1\}^m = (1+\lam)^m$. Since $a\neq 1$, $a$ does not satisfy any Deuring polynomial over $K$. To prove that $1/2$ is an upper bound, just observer that $f_a^1$ is already in $(x^2,y^2,z^2)$ making $1/2$ an upper bound. Now, we would like to show that $(2^{e-1}-1)/2^e$ is a lower bound for all $e$, which would result in an lower bound of $1/2$. So let 
$$N=2^{e-1}-1 = 1 + 2 + 2^2 + ... + 2^{e-3} + 2^{e-2} $$
To avoid carrying, choose $N=n+m$ with 
$$
n = 2^{e-2}, m = 2^{e-2}-1=n-1 = 1+ 2 + ... + 2^{e-3}.
$$
By construction, and due to \nameref{hassePolyGeneral}, the coefficient of $x^{2m}y^{2n}z^{n+m}$ does not vanish, while $\max\{2n,2m,m+n\}=2n = 2^{e-1}<2^e$. Thus we get an lower bound of $N/2^e = (2^{e-1}-1)/2^e$ as required.
\end{disc}

\bibliographystyle{plain}
\bibliography{unibib}

\begin{thebibliography}{10}

\bibitem{KSbasic}
Ang\'elica Benito, Eleonore Faber, and Karen~E. Smith.
\newblock Measuring singularities with {F}robenius: the basics.
\newblock In {\em Commutative algebra}, pages 57--97. Springer, New York, 2013.

\bibitem{BBelliptic}
Bhargav Bhatt and Anurag~K. Singh.
\newblock The {$F$}-pure threshold of a {C}alabi-{Y}au hypersurface.
\newblock {\em Math. Ann.}, 362(1-2):551--567, 2015.

\bibitem{BMS09}
Manuel Blickle, Mircea Musta\c{t}\u{a}, and Karen~E. Smith.
\newblock {$F$}-thresholds of hypersurfaces.
\newblock {\em Trans. Amer. Math. Soc.}, 361(12):6549--6565, 2009.

\bibitem{BrillMor04}
John Brillhart and Patrick Morton.
\newblock Class numbers of quadratic fields, {H}asse invariants of elliptic
  curves, and the supersingular polynomial.
\newblock {\em J. Number Theory}, 106(1):79--111, 2004.

\bibitem{CULL14}
John Cullinan and Farshid Hajir.
\newblock On the {G}alois groups of {L}egendre polynomials.
\newblock {\em Indag. Math. (N.S.)}, 25(3):534--552, 2014.

\bibitem{DUR}
Max Deuring.
\newblock Die {T}ypen der {M}ultiplikatorenringe elliptischer
  {F}unktionenk\"{o}rper.
\newblock {\em Abh. Math. Sem. Hansischen Univ.}, 14:197--272, 1941.

\bibitem{DIC}
L.E. Dickson.
\newblock Theorems on the residues of multinomial coefficients with respect to
  a prime modulus.
\newblock {\em Quarterly Journal of Pure and Applied Mathematics}, 33:378--384,
  1902.

\bibitem{H01}
Nobuo Hara.
\newblock Geometric interpretation of tight closure and test ideals.
\newblock {\em Trans. Amer. Math. Soc.}, 353(5):1885--1906, 2001.

\bibitem{HT04}
Nobuo Hara and Shunsuke Takagi.
\newblock On a generalization of test ideals.
\newblock {\em Nagoya Math. J.}, 175:59--74, 2004.

\bibitem{HW02}
Nobuo Hara and Kei-ichi Watanabe.
\newblock F-regular and {F}-pure rings vs. log terminal and log canonical
  singularities.
\newblock {\em J. Algebraic Geom.}, 11(2):363--392, 2002.

\bibitem{HY03}
Nobuo Hara and Ken-ichi Yoshida.
\newblock A generalization of tight closure and multiplier ideals.
\newblock {\em Trans. Amer. Math. Soc.}, 355(8):3143--3174, 2003.

\bibitem{HARTS}
R.~Hartshorne.
\newblock {\em Algebraic Geometry}.
\newblock Springer, NY, 1977.

\bibitem{HerLog}
Daniel~J. Hern\'andez.
\newblock {$F$}-purity versus log canonicity for polynomials.
\newblock {\em Nagoya Math. J.}, 224(1):10--36, 2016.

\bibitem{HNWZ16}
Daniel~J. Hern\'andez, Luis N\'u\~nez Betancourt, Emily~E. Witt, and Wenliang
  Zhang.
\newblock {$F$}-pure thresholds of homogeneous polynomials.
\newblock {\em Michigan Math. J.}, 65(1):57--87, 2016.

\bibitem{HH90}
Melvin Hochster and Craig Huneke.
\newblock Tight closure, invariant theory, and the {B}rian\c{c}on-{S}koda
  theorem.
\newblock {\em J. Amer. Math. Soc.}, 3(1):31--116, 1990.

\bibitem{IGUS58}
Jun-ichi Igusa.
\newblock Class number of a definite quaternion with prime discriminant.
\newblock {\em Proc. Nat. Acad. Sci. U.S.A.}, 44:312--314, 1958.

\bibitem{KOEPF}
Wolfram Koepf.
\newblock {\em Hypergeometric summation}.
\newblock Universitext. Springer, London, second edition, 2014.
\newblock An algorithmic approach to summation and special function identities.

\bibitem{KOL97}
J\'anos Koll\'ar.
\newblock Singularities of pairs.
\newblock In {\em Algebraic geometry---{S}anta {C}ruz 1995}, volume~62 of {\em
  Proc. Sympos. Pure Math.}, pages 221--287. Amer. Math. Soc., Providence, RI,
  1997.

\bibitem{LEG1785}
Adrien~Marie Legendre.
\newblock Recherches sur l'attraction d\`{e}s sphero\"{i}des homog\`{e}nes.
\newblock {\em M\'{e}moires de Mathematiques et de Physique}, X:411--435, 1785.
\newblock {P}r\`{e}sent\`{e}s a l'Academie Royale des Sciences, par divers
  savans, et lus dans ses Assembl\`{e}es.

\bibitem{LUC}
Edouard Lucas.
\newblock Theorie des {F}onctions {N}umeriques {S}implement {P}eriodiques.
  [{C}ontinued].
\newblock {\em Amer. J. Math.}, 1(3):197--240, 1878.

\bibitem{MOR}
Patrick Morton.
\newblock Explicit identities for invariants of elliptic curves.
\newblock {\em J. Number Theory}, 120(2):234--271, 2006.

\bibitem{MUL1}
Susanne M\"{u}ller.
\newblock The {$F$}-pure threshold of quasi-homogeneous polynomials.
\newblock 2016.
\newblock preprint, \url{https://arxiv.org/abs/1601.08086}.

\bibitem{MUL2}
Susanne M\"{u}ller.
\newblock The {$F$}-pure threshold and height of quasi-homogeneous polynomials.
\newblock 2017.
\newblock preprint, \url{https://arxiv.org/abs/1702.07553}.

\bibitem{MTW05}
Mircea Musta\c{t}\u{a}, Shunsuke Takagi, and Kei-ichi Watanabe.
\newblock F-thresholds and {B}ernstein-{S}ato polynomials.
\newblock In {\em European {C}ongress of {M}athematics}, pages 341--364. Eur.
  Math. Soc., Z{\"u}rich, 2005.

\bibitem{SILV}
Joseph~H. Silverman.
\newblock {\em The arithmetic of elliptic curves}, volume 106 of {\em Graduate
  Texts in Mathematics}.
\newblock Springer, Dordrecht, second edition, 2009.

\bibitem{Smi00}
Karen~E. Smith.
\newblock The multiplier ideal is a universal test ideal.
\newblock {\em Comm. Algebra}, 28(12):5915--5929, 2000.
\newblock Special issue in honor of Robin Hartshorne.

\bibitem{T04}
Shunsuke Takagi.
\newblock An interpretation of multiplier ideals via tight closure.
\newblock {\em J. Algebraic Geom.}, 13(2):393--415, 2004.

\bibitem{TW04}
Shunsuke Takagi and Kei-ichi Watanabe.
\newblock On {F}-pure thresholds.
\newblock {\em J. Algebra}, 282(1):278--297, 2004.

\bibitem{WH52}
J.~H. Wahab.
\newblock New cases of irreducibility for {L}egendre polynomials.
\newblock {\em Duke Math. J.}, 19:165--176, 1952.

\end{thebibliography}

\end{document}